\documentclass[11pt,letterpaper]{article}

\usepackage{amssymb,amsmath,amsthm,enumitem,csquotes,tikz-cd,tikz,mathrsfs}

\addtolength{\textheight}{1.2in}
\addtolength{\topmargin}{-.6in}
\addtolength{\textwidth}{1.5in}
\addtolength{\oddsidemargin}{-.75in}
\addtolength{\evensidemargin}{-.75in}
\setlength{\parindent}{0cm} 

\setcounter{topnumber}{1}
\numberwithin{equation}{section}

\newtheorem{thm}{Theorem}
\newtheorem{prp}{Proposition}[section]
\newtheorem{lmm}[prp]{Lemma}
\newtheorem{crl}[prp]{Corollary}

\theoremstyle{definition}
\newtheorem{dfn}[prp]{Definition}
\newtheorem{eg}[prp]{Example}

\theoremstyle{remark}
\newtheorem{rmk}[prp]{Remark}

\def\BE#1{\begin{equation}\label{#1}}
\def\EE{\end{equation}}

\def\BEnum#1{\begin{enumerate}[label=#1,leftmargin=*,topsep=-10pt,itemsep=-3pt]}
\def\EEnum{\end{enumerate}}

\def\lra{\longrightarrow}

\def\D{\mathbb D}

\def\R{\mathbb R}
\def\N{\mathbb N}

\def\Z{\mathbb Z}

\def\gee{g}

\def\supp{\tn{supp}}

\def\supp{\textnormal{supp}}
\def\area{\textnormal{area}}

\def\vol{\textnormal{vol}}
\def\KD{\textnormal{KD}}
\def\D{\textnormal{D}}
\def\mass{\textbf{M}}
\def\flat{\textbf{F}}
\def\resultyear{1976}
\def\d{\textnormal{d}}

\title{Coarsely Holomorphic Curves and Symplectic Topology}
\author{Spencer Cattalani\thanks{Partially supported by NSF grant DMS 1901979 and by the Simons Foundation}}
\date{\today}

\begin{document}

\maketitle

\begin{abstract}
A taming symplectic structure provides an upper bound on the area of an approximately pseudoholomorphic curve in terms of its homology class. We prove that, conversely, an almost complex manifold with such an area bound admits a taming symplectic structure. This confirms a speculation by Gromov. We also characterize the cone of taming symplectic structures numerically, prove that complex 2-cycles can be approximated by coarsely holomorphic curves, and provide a lower energy bound for such curves.
\end{abstract}

\tableofcontents

\section{Introduction}
\label{Introduction_sec}
Among all almost complex manifolds, those tamed by symplectic structures have a particularly robust geometry. Often, for example in Gromov's Compactness Theorem \cite{Gr}, the role of a symplectic form $\omega$ in the proof of a geometric result is to provide an upper bound on the area of a pseudoholomorphic curve $S$ in terms of the homology pairing $\langle [\omega], [S] \rangle$. Is a taming symplectic form necessary for such a bound, or can such arguments be applied to a wider class of almost complex manifolds? This paper demonstrates that a taming symplectic form is, in fact, necessary for such a bound to hold for \textit{coarsely holomorphic curves}, as was speculated in \cite[Section IV, p143]{GrSp}. These objects have much of the global rigidity of pseudoholomorphic curves, but have enough local flexibility to exist in cases where pseudoholomorphic curves do not. This makes them particularly well-suited to probe the large-scale behavior of almost complex manifolds and, in particular, answer the question of when they admit taming symplectic structures.\\

Informally, coarsely holomorphic curves are surfaces which are close, on average, to being pseudoholomorphic. More formally, let $(X,J,g)$ be a compact almost complex Riemannian manifold. No compatibility is required between $J$ and $g$. Let $\widetilde{Gr}_2(TX)$ be the space of oriented 2-planes in $TX$. Fix a continuous nonnegative ``distance'' function $D$ on $\widetilde{Gr}_2(TX)$ that vanishes precisely on the subspace of $J$-invariant planes with the orientation induced by $J$.




\begin{dfn}\label{coarse_curve_dfn}
Let $\varepsilon > 0$. A closed oriented surface $S$ in $X$ is a \textit{coarsely $\varepsilon$-holomorphic curve} if
$$\frac{\int_S \D(TS)\vol}{\int_S \vol} < \varepsilon.$$
\end{dfn}

\begin{eg}
A closed oriented surface is a pseudoholomorphic curve if and only if it is coarsely $\varepsilon$-holomorphic for every $\varepsilon > 0$.
\end{eg}


\begin{rmk}
As $X$ is compact, different choices of metric and distance function only change the value of $\varepsilon$ in the definition above. As we let $\varepsilon$ go to zero, these choices are inconsequential. The distance function we use, called the \textit{Kähler defect}, interacts nicely with the almost complex structure. It is defined in Section \ref{Prelim_sec}.
\end{rmk}

\begin{thm}\label{Gromov_conj_thm}
Let $(X,J)$ be a closed almost complex manifold. A cohomology class $h\!\in\!H^2(X,\R)$ contains a symplectic form taming $J$ if and only if there exist $\varepsilon\!>\!0$ and $C\!>\!0$ such that
\begin{equation}\label{area_bound_eqn}
\area(S) \leq C\langle h,[S]\rangle
\end{equation}
for every coarsely $\varepsilon$-holomorphic curve $S$ in $X$.
\end{thm}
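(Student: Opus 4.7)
The plan is to prove the two implications separately, using Sullivan's cone-duality theorem for almost complex manifolds as the backbone of the reverse direction.

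For the forward implication, suppose $\omega\in h$ tames $J$. By compactness of $X$ and of $\wt{Gr}_2(TX)$, there exist constants $c,\delta>0$ and $M<\infty$ such that $\omega|_P\geq c\,\vol|_P$ on every $J$-invariant oriented $2$-plane $P$ (with the $J$-induced orientation), $|\omega|\leq M$ pointwise, and $\omega|_P\geq (c/2)\vol|_P$ whenever $\D(P)<\delta$. For a coarsely $\varepsilon$-holomorphic $S$, Markov's inequality applied to Definition \ref{coarse_curve_dfn} yields $\area\{p\in S:\D(T_pS)\geq\delta\}\leq(\varepsilon/\delta)\area(S)$. Splitting $\int_S\omega$ over this set and its complement gives
\[\int_S\omega \;\geq\; (c/2)\,\area(S)\;-\;(M+c/2)(\varepsilon/\delta)\,\area(S),\]
and choosing $\varepsilon$ small enough that the right-hand side is a positive multiple of $\area(S)$ produces the bound $\area(S)\leq C\lr{h,[S]}$.

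For the reverse implication, I would invoke Sullivan's duality theorem in the form: $h\in H^2(X;\R)$ contains a symplectic form taming $J$ if and only if $\lr{h,[T]}>0$ for every nonzero closed current $T$ on $X$ supported on $J$-invariant $2$-planes with the $J$-induced orientation (``$J$-positive currents''). Assuming the area bound with constants $\varepsilon_0,C$, the task is to show that every nonzero closed $J$-positive current $T$ satisfies $\lr{h,[T]}\geq\mass(T)/C$. The strategy is to produce a sequence of coarsely $\varepsilon_n$-holomorphic surfaces $S_n$ with $\varepsilon_n\to 0$ whose normalized currents of integration $[S_n]/\area(S_n)$ weakly converge to $T/\mass(T)$. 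The hypothesis then reads $1\leq C\lr{h,[S_n]}/\area(S_n)$ for each $n$, and pairing both sides with a smooth closed representative of $h$ and passing to the limit yields $\lr{h,[T]}\geq \mass(T)/C>0$.

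The main obstacle is the approximation step: one must construct \emph{closed oriented surfaces}, not merely integral $2$-currents, that weakly approximate $T$ and whose tangent planes have average Kähler defect tending to zero. This is a quantitative strengthening of the Federer--Fleming density of smooth submanifolds in currents, and it is precisely one of the independently advertised main results of the paper, so I would cite it as a black box. My rough sketch for proving it would be to locally decompose $T$ as a positive-measure superposition of small $J$-invariant disks, use smoothing and handle-attachment to cancel their boundaries into a single closed surface, and verify that the modifications enlarge the average $\D$ by an amount controllable in terms of the scale of the decomposition.
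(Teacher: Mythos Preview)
Your proposal is correct and follows essentially the same route as the paper: Sullivan-type cone duality (the paper's Proposition~\ref{Rel_cycle_prp}) together with the approximation of complex cycles by coarsely holomorphic curves (Theorem~\ref{Approx_thm}), which you rightly single out as the main black box. The only cosmetic difference is that the paper runs the reverse implication by contradiction---producing a complex cycle $T$ with $\lr{h,[T]}=0$ and approximating it to violate \eqref{area_bound_eqn}---whereas you argue directly that $\lr{h,[T]}\geq\mass(T)/C>0$ for every complex cycle and then invoke Sullivan.
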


\begin{rmk}
Theorem \ref{Gromov_conj_thm} was posed by Gromov as ``questionable'' in \cite[Section IV, p143]{GrSp}. There, it is stated in terms of $(\varepsilon,\delta)$-approximately $J$-holomorphic curves: a closed oriented surface~$S$ in $X$ is an \textit{$(\varepsilon,\delta)$-approximately $J$-holomorphic curve} if
$$\area(S_\varepsilon) \geq (1 - \delta)\area(S),$$
where $S_\varepsilon \subset S$ is the region on which $\D(TS) < \varepsilon$.
A coarsely $\varepsilon$-holomorphic curve is $(\varepsilon',\delta)$-approximately $J$-holomorphic for any $\varepsilon'\!>\!0$ and $\delta\!>\!0$ such that $\varepsilon'\delta < \varepsilon$. Conversely, if $M$ is the maximum of $D$ on $\widetilde{Gr}_2(TX)$, an $(\varepsilon',\delta)$-approximately $J$-holomorphic curve is
coarsely $(\varepsilon' + M\delta)$-holomorphic. Therefore, the two notions are equivalent.
\end{rmk}

\begin{eg}
The Inoue surface, as described in \cite{Ino}, is  a closed complex surface containing no holomorphic curves. Therefore, (\ref{area_bound_eqn}) is vacuous for holomorphic curves $S$. However, the Inoue surface does not admit a symplectic structure. This shows that Theorem~\ref{Gromov_conj_thm} is false if ``coarsely $\varepsilon$-holomorphic'' is replaced with ``pseudoholomorphic''.
\end{eg}

The following corollary is a numerical characterization of the symplectic cone. It is a symplectic analogue of Kleiman's criterion \cite[Theorem~IV.2.1]{Klei} from projective geometry.

\begin{crl}\label{symp_cone_crl}
Let $(X,J)$ be a closed almost complex manifold such that $J$ is tamed by some symplectic form. A cohomology class
$h\!\in\!H^2(X,\R)$ contains a symplectic form taming $J$ if and only if there exists $\varepsilon\!>\!0$ such that $h$ is strictly positive on
every coarsely $\varepsilon$-holomorphic curve.
\end{crl}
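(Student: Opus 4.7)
The forward direction ($\Rightarrow$) is immediate from Theorem~\ref{Gromov_conj_thm}: a symplectic form taming $J$ in class $h$ yields $\varepsilon, C > 0$ with $\area(S) \le C\langle h,[S]\rangle$ on every coarsely $\varepsilon$-holomorphic curve, and since $\area(S) > 0$ this forces $\langle h,[S]\rangle > 0$.

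For the converse, fix a symplectic form $\omega_0$ taming $J$ (which exists by hypothesis) and set $h_0 := [\omega_0]$. Applying Theorem~\ref{Gromov_conj_thm} to $h_0$ produces constants $\varepsilon_0, C_0 > 0$ such that $\area(S) \le C_0\langle h_0,[S]\rangle$ for every coarsely $\varepsilon_0$-holomorphic $S$. Let $\varepsilon' := \min(\varepsilon,\varepsilon_0)$. Combining the strict positivity of $h$ with the area bound for $h_0$, for any $\eta > 0$ and any coarsely $\varepsilon'$-holomorphic $S$,
$$\langle h+\eta h_0,[S]\rangle = \langle h,[S]\rangle + \eta\langle h_0,[S]\rangle \ge \frac{\eta}{C_0}\area(S),$$
so $\area(S) \le (C_0/\eta)\langle h+\eta h_0,[S]\rangle$. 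By the converse direction of Theorem~\ref{Gromov_conj_thm}, the class $h + \eta h_0$ contains a taming symplectic form for every $\eta > 0$.

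It remains to conclude that $h$ itself is taming. I would argue by contradiction. If $h$ were not taming, Theorem~\ref{Gromov_conj_thm} would supply coarsely $\varepsilon_n$-holomorphic curves $S_n$ with $\varepsilon_n \to 0$ and $\langle h,[S_n]\rangle/\area(S_n) \to 0$. Normalize $T_n := [S_n]/\area(S_n)$ to currents of unit mass; the $h_0$-bound forces $\langle h_0,T_n\rangle \ge 1/C_0$, so any weak subsequential limit $T$ is nonzero. Because $\varepsilon_n \to 0$, the limit $T$ is a $J$-invariant positive current (a complex 2-cycle), while continuity of the pairing gives $\langle h,T\rangle = 0$. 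Invoking the approximation theorem for complex 2-cycles announced in the abstract, one realizes $T$ as a flat-norm limit of normalized integer currents of coarsely holomorphic curves; combined with the strict positivity hypothesis and the density of representable integer classes in the cone of complex 2-cycles, this should produce a coarsely $\varepsilon$-holomorphic curve $S$ with $\langle h, [S]\rangle \le 0$, the desired contradiction.

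The principal obstacle is this final step: propagating strict positivity of $\langle h,[S]\rangle$ on integer classes of coarsely $\varepsilon$-holomorphic curves to strict positivity of $\langle h,T\rangle$ on every nonzero complex 2-cycle. The difficulty is that strict inequalities degenerate under weak limits, so one needs a sufficiently quantitative form of the approximation theorem ensuring that every ray in the closed cone of complex 2-cycles is densely approximable by representable integer classes — enough to detect any vanishing of $\langle h,\cdot\rangle$ and thereby turn it into a violation of the hypothesis on some actual coarsely $\varepsilon$-holomorphic curve.
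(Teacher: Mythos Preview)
Your final step is indeed the obstacle, and it is a genuine gap: Theorem~\ref{Approx_thm} gives coarsely $\varepsilon_i$-holomorphic $S_i$ with $c_i S_i \to T$ weakly and $\varepsilon_i \to 0$, but this only yields $\langle h,[S_i]\rangle/\area(S_i) \to 0$, never $\langle h,[S_i]\rangle \le 0$. Since the hypothesis is a strict inequality on integer classes, no amount of approximation will manufacture a violation of it. Your step showing $h + \eta h_0$ is taming for all $\eta > 0$ is correct but does not help either: the taming cone in $H^2(X,\R)$ is open, so this only places $h$ in its closure, and you never use it in the contradiction argument anyway.

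The paper's argument is shorter and bypasses currents and weak limits altogether. Having fixed a taming $\omega$, it asserts---invoking only the finite-dimensionality of $H_2(X,\R)$---that there exists $C>0$ with $\langle[\omega],[S]\rangle \le C\langle h,[S]\rangle$ for every coarsely $\varepsilon$-holomorphic $S$. Combined with the area bound $\area(S)\le C'\langle[\omega],[S]\rangle$ coming from $\omega$, this is precisely~\eqref{area_bound_eqn} for $h$, and Theorem~\ref{Gromov_conj_thm} finishes immediately. In other words, the paper compares $h$ and $[\omega]$ directly as linear functionals on the finite-dimensional space $H_2(X,\R)$, rather than routing the argument through the infinite-dimensional space of complex cycles where strict inequalities degenerate under limits. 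Your perturbation $h+\eta h_0$ and the subsequent contradiction argument are not needed once one sees this direct comparison.
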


Complex cycles, as defined in \cite{Sullivan}, are key to the proof of Theorem~\ref{Gromov_conj_thm}. They have been generally overlooked in symplectic topology, but have recently been applied in \cite{CAT}. Proposition \ref{Rel_cycle_prp} below, which is a relative version of \cite[Theorem III.2]{Sullivan}, indicates their importance.

\begin{prp}\label{Rel_cycle_prp}
Let $(X,J)$ be a closed almost complex manifold and $V$ be a linear subspace of $H^2(X,\R)$. If $V$ does not contain the class of a symplectic form taming $J$, there is a complex cycle $T \neq 0$ such that $\langle h, [T] \rangle = 0$ for all $h \in V$.
\end{prp}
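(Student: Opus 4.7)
The plan is to adapt Sullivan's Hahn--Banach argument from \cite[Theorem~III.2]{Sullivan} to this relative setting. Let $\Omega^2(X)$ denote the Fr\'echet space of smooth 2-forms on $X$, let $\cT\subset\Omega^2(X)$ be the open convex cone of 2-forms taming $J$ pointwise, and let $V'\subset\Omega^2(X)$ be the closed linear subspace of closed 2-forms whose de Rham class lies in $V$. A Hermitian metric compatible with $J$ produces a taming 2-form, so $\cT\neq\eset$, and the hypothesis that $V$ contains no class of a taming symplectic form is exactly $V'\cap\cT=\eset$.

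The geometric Hahn--Banach theorem, applied to the closed subspace $V'$ and the nonempty open convex cone $\cT$, produces a nonzero continuous linear functional $T\colon\Omega^2(X)\to\R$---a de Rham 2-current---with $T|_{V'}=0$ and $T(\om)>0$ for every $\om\in\cT$. Since exact 2-forms lie in $V'$ (their class is $0\in V$), the current $T$ is closed and defines a class $[T]\in H_2(X,\R)$; for any $h\in V$ with closed representative $\om$, we have $\om\in V'$, so $\lr{h,[T]}=T(\om)=0$. Moreover $T\neq 0$, because it is strictly positive on the nonempty cone $\cT$.

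It remains to identify $T$ with a complex cycle in the sense of \cite{Sullivan}, i.e., a closed 2-current representable via its mass measure as an integral over the Grassmann bundle of oriented $J$-invariant 2-planes with positive weights. The key ingredient is the pointwise convex duality in $\La^2 T_pX$: the dual of the cone of $J$-taming 2-forms in $\La^2 T_p^*X$ is precisely the closed convex cone generated by the simple 2-vectors $v\wedge Jv$, $v\in T_pX$. This is a standard bipolar-theorem argument, in which any weakly tame separating form is promoted to a strictly tame one by adding a small multiple of a fixed taming reference. To globalize, one first checks that $T$ has finite mass: comparing with a fixed taming $\om_0$ shows that $\om_0+t\om\in\cT$ whenever $|t|\leq c_0/\|\om\|_{C^0}$, whence $|T(\om)|\leq C\|\om\|_{C^0}$. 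The structure theorem for currents of order zero then presents $T$ as integration against a measurable 2-vector density, and the pointwise duality forces this density to lie in the cone of $J$-lines almost everywhere, giving the desired complex cycle structure.

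The main obstacle is this last globalization step---translating the positivity of $T$ on $\cT$ into almost-everywhere positivity of the associated 2-vector density against the cone of $J$-lines. A measurable-selection argument is required to show that, were this to fail on a set of positive measure, one could manufacture a taming 2-form on which $T$ is negative, contradicting the Hahn--Banach output. The remaining functional-analytic steps follow a standard template, and together with this globalization they assemble into the statement of the proposition.
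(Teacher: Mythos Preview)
Your first two paragraphs reproduce the paper's proof: apply geometric Hahn--Banach to separate the open convex cone $\Omega^+(X)$ of pointwise-taming 2-forms from the closed linear subspace $\widetilde{V}$ of closed forms with class in $V$, obtaining a closed 2-current $T$ vanishing on $\widetilde{V}$ and strictly positive on $\Omega^+(X)$.

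The final two paragraphs are unnecessary here. The paper's Definition~\ref{complex_cycle_dfn} takes ``complex cycle'' to mean exactly a closed 2-current that is positive on $\Omega^+(X)$, so the Hahn--Banach output is already a complex cycle by definition and the proof ends. Your additional work---bounding the mass of $T$ via $\om_0+t\om\in\Omega^+(X)$ and then showing its 2-vector density lies almost everywhere in the cone generated by $v\wedge Jv$---recovers Sullivan's original structural description of complex cycles. That argument is correct (and is essentially Sullivan's own structure theorem), but the paper sidesteps it entirely by adopting the functional characterization as the definition.
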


In particular, if ``coarsely $\varepsilon$-holomorphic curve'' were replaced with ``complex cycle'' in the statement of Theorem \ref{Gromov_conj_thm}, it would follow immediately from Proposition \ref{Rel_cycle_prp}. The definition of complex cycle is recalled in Section~\ref{Prelim_sec}. Proposition \ref{Rel_cycle_prp} shows that a geometric condition which sufficiently constrains the behavior of complex cycles implies the existence of a taming symplectic form. Therefore, questions concerning the sharpness of geometric results in symplectic topology can be reduced to ones about the geometry of complex cycles.
Theorem~\ref{Approx_thm} below is very useful in this regard. It says that complex cycles can be approximated by coarsely holomorphic curves, which are more geometric and easier to relate to pseudoholomorphic curves.

\begin{thm}\label{Approx_thm}
Let $(X,J)$ be a closed almost complex manifold. Let $T \neq 0$ be a complex cycle on~$X$. There exist sequences $\varepsilon_i >0$, $c_i > 0$, and of $S_i \subset X$ such that $S_i$ is a coarsely $\varepsilon_i$-holomorphic curve, $\varepsilon_i$ tends to zero, and $c_iS_i$ tends to $T$ in the weak topology on currents.
\end{thm}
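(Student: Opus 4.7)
The plan is to approximate $T$ in two stages: first smooth $T$ by convolution to obtain smooth 2-currents $T_\delta$ converging weakly to $T$, then approximate each $T_\delta$ by the integration current of a closed oriented surface. For the smoothing, I would use a partition of unity subordinate to coordinate charts together with standard Euclidean mollifiers to build $T_\delta$ as a sum of locally mollified pieces. Then $T_\delta$ tends weakly to $T$ as $\delta\to 0$, its total mass $\|T_\delta\|(X)$ stays uniformly bounded, and the tangent 2-vector of $T_\delta$ at $x$ is the coordinate pushforward of a weighted average of $J(y)$-invariant 2-vectors for $y$ in a $\delta$-neighborhood of $x$. Uniform continuity of $J$ on the compact $X$ then yields an $O(\delta)$ bound on the mass-averaged Kähler defect of $T_\delta$, which tends to zero with $\delta$.

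For the second stage, I would cover $X$ by a fine grid of small cubes of sidelength $\eta \ll \delta$. In each cube $Q$, the smooth current $T_\delta$ is close to a single simple 2-vector at the center of $Q$ with weight $m_Q := \|T_\delta\|(Q)$. I would realize this piece by a small closed oriented surface in $Q$ whose tangent planes are essentially parallel to the target 2-vector, for instance a ``lens'' obtained by joining two small parallel flat disks in the target 2-plane along their boundary circles with a thin tube, giving a topological sphere. Choosing tube width $\ll$ disk radius $\ll \eta$ makes the tube a vanishing fraction of the total area. Taking $c_i = 1$ and $S_i$ to be the disjoint union of these per-cube surfaces, each scaled so its area equals $m_Q$, the integration current $[S_i]$ approximates $T_\delta$ weakly; a diagonal choice of $\delta_i, \eta_i$, and disk/tube widths then produces $c_i[S_i] \to T$.

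The Kähler-defect count along $S_i$ splits into the disk and tube contributions: each flat disk has tangent plane within $O(\delta+\eta)$ of $J$-invariant (inherited from Step 1 and the variation of $J$ inside $Q$), while the tubes contribute an amount bounded by the maximum of $\D$ times the tube area fraction, which is chosen to vanish. Hence $\varepsilon_i\to 0$. The main obstacle I anticipate is precisely this construction in Step 2: the local closed pieces must simultaneously approximate the smooth 2-vector field of $T_\delta$ in the weak-current sense, close up topologically without boundary, and contribute only negligible non-holomorphic area. This is essentially a Grassmannian-refined analogue of the Federer--Fleming polyhedral approximation theorem for normal currents, and verifying the needed uniform estimates over $X$ requires careful balancing of the three length scales $\delta$, $\eta$, and the tube/disk geometry, as well as care with orientations so that the sum of local pieces really gives the correct 2-vector density and not just the correct mass.
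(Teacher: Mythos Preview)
Your plan has a genuine gap at Step~2 that makes the construction fail as stated. Each of your ``lens'' pieces is a small topological sphere, hence bounds a $3$--chain in $X$; the disjoint union $S_i$ is therefore null-homologous. For any \emph{closed} $2$--form $\alpha$ one then has $\int_{S_i}\alpha=0$, so if $c_iS_i\to T$ weakly it forces $T(\alpha)=0$ for every closed $\alpha$, i.e.\ $[T]=0$ in $H_2(X;\R)$. But complex cycles need not be null-homologous (a pseudoholomorphic curve in a nontrivial class is the simplest counterexample), so your surfaces cannot converge weakly to a general $T$. The phrase ``care with orientations'' at the end does not address this: no choice of orientation on a collection of small spheres produces a nonzero class. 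There is a second problem with Step~1: after mollification the $2$--vector of $T_\delta$ at a point is a convex combination of $J$--invariant simple $2$--vectors, which is a positive $(1,1)$--vector but generically \emph{not} simple. It is therefore not ``close to a single simple $2$--vector'', and cannot be realized by a single flat disk; you would at minimum need to diagonalize it into several $J$--complex planes in each cube, which further complicates the mass bookkeeping.

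The paper sidesteps both difficulties by never trying to make the tangent planes of $S_i$ close to $J$--invariant. Instead it proves a purely metric approximation statement (Proposition~\ref{key_prp}): any finite-mass $2$--cycle $T$ is a weak limit of $c_iS_i$ with $S_i$ embedded closed surfaces and, crucially, $\mass(c_iS_i)\to\mass(T)$. This is obtained from polyhedral approximation (Federer--Fleming) followed by an explicit parametrization of integer polyhedral $2$--cycles by triangulated surfaces, smoothing, Whitney immersion, and a local surgery to remove double points; the homology class is carried correctly because the polyhedral cycle already has it. The coarse holomorphicity then drops out for free: for a complex cycle one has $\mass(T)=T(\omega)$ (Wirtinger), so
\[
c_i\!\int_{S_i}\omega\;\longrightarrow\;T(\omega)=\mass(T)\;\longleftarrow\;c_i\!\int_{S_i}\vol,
\]
whence the mass-averaged K\"ahler defect $\int_{S_i}(\vol-\omega)\big/\int_{S_i}\vol$ tends to $0$. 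The point is that controlling mass in the limit is exactly what forces the defect to vanish on average; no pointwise control of tangent planes is needed.
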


\begin{crl}\label{Existence_crl}
Let $(X,J)$ be a closed almost complex manifold. For every $\varepsilon\!>\!0$, there exists a coarsely $\varepsilon$-holomorphic curve in $X$.
\end{crl}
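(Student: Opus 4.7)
The plan is to combine Proposition \ref{Rel_cycle_prp} with Theorem \ref{Approx_thm}: first produce a single nonzero complex cycle on $X$, then approximate it by coarsely holomorphic curves whose $\varepsilon_i$ can be taken arbitrarily small.

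I would apply Proposition \ref{Rel_cycle_prp} to the trivial subspace $V = \{0\} \subset H^2(X,\R)$. The only thing to verify is that $V$ does not contain the class of a symplectic form taming $J$. This is automatic: if $J$ is not tamed by any symplectic form, the hypothesis is vacuous, and otherwise a taming symplectic form $\omega$ on the closed manifold $X$ of real dimension $2n$ satisfies $\int_X \omega^n > 0$ since $\omega^n$ is (up to sign) a volume form, whence $[\omega]^n \neq 0$ in $H^{2n}(X,\R)$ and in particular $[\omega] \neq 0$. Proposition \ref{Rel_cycle_prp} then delivers a complex cycle $T \neq 0$.

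Next, I would feed $T$ into Theorem \ref{Approx_thm} to obtain sequences $\varepsilon_i \to 0$, $c_i > 0$, and $S_i \subset X$ such that each $S_i$ is a coarsely $\varepsilon_i$-holomorphic curve. Given the prescribed $\varepsilon > 0$, choose $i$ large enough that $\varepsilon_i < \varepsilon$. The defining inequality of Definition \ref{coarse_curve_dfn} is monotone in $\varepsilon$, so $S_i$ is also coarsely $\varepsilon$-holomorphic, which gives the desired curve.

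No step is a real obstacle; the deduction is essentially a two-line consequence of the already-stated results. The only substantive point is the observation that $V = \{0\}$ satisfies the hypothesis of Proposition \ref{Rel_cycle_prp}, which reduces to the standard fact that a closed symplectic manifold has nonvanishing degree-two cohomology. The weights $c_i$ and the weak convergence $c_i S_i \to T$ provided by Theorem \ref{Approx_thm} are not needed here—only the existence of coarsely $\varepsilon_i$-holomorphic curves with $\varepsilon_i$ arbitrarily small.
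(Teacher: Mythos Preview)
Your proposal is correct and follows essentially the same two-step argument as the paper's proof: apply Proposition~\ref{Rel_cycle_prp} with $V=\{0\}$ to obtain a nonzero complex cycle, then invoke Theorem~\ref{Approx_thm} to produce coarsely $\varepsilon_i$-holomorphic curves with $\varepsilon_i\to 0$. Your additional justification that $[\omega]\neq 0$ for a taming symplectic form is a reasonable elaboration of a point the paper leaves implicit.
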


\begin{rmk}
As coarsely holomorphic curves are only required to be close to being pseudoholomorphic \textit{on average}, they have no local structure and can only probe the geometry of a space on large scales. Their relation to surfaces which are \textit{uniformly} close to being pseudoholomorphic is unclear. Such surfaces have local structure, which potentially obstructs their ability to sense the coarse aspects of a space. It is not known to what extent Theorems \ref{Gromov_conj_thm} and \ref{Approx_thm} might hold if ``coarsely holomorphic'' is replaced with ``uniformly approximately holomorphic'' in their statements, but \cite{Don} contains some results in this direction.
\end{rmk}

We also show that there is a lower bound on the area, or ``energy'', of a coarsely pseudoholomorphic curve. This generalizes the classical lower energy bound for pseudoholomorphic curves \cite[Corollary~4.2]{Zinger} and demonstrates rigidity in the geometry at large scales of coarsely holomorphic curves.

\begin{thm}\label{lower_energy_bound_thm}
Let $(X,J,g)$ be a compact almost complex Riemannian manifold. There exist $\hbar > 0$ and $\varepsilon > 0$ such that
\begin{equation}\label{lower_bound_eqn}
\area(S) > \hbar  
\end{equation}
for every coarsely $\varepsilon$-holomorphic curve $S$ in $X$. If $X$ contains no pseudoholomorphic curves, $\hbar$ can be chosen arbitrarily large.
\end{thm}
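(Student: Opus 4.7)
My plan is to prove both assertions by contradiction via a blow-up analysis that would reduce a hypothetical small-area coarsely holomorphic curve to a pseudoholomorphic tangent varifold in $\C^n$, combined with Federer--Fleming compactness.

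For the first assertion, I would suppose no such $\hbar$ exists and pick coarsely $\varepsilon_i$-holomorphic curves $S_i$ with $\varepsilon_i\to 0$ and $\area(S_i)\to 0$. Since the area-density function $\rho\mapsto \area(S_i\cap B_g(p,\rho))/(\pi\rho^2)$ equals $1$ as $\rho\to 0$ at any smooth point of $S_i$ and tends to $0$ as $\rho\to\infty$, for each $i$ the intermediate value theorem selects $p_i\in S_i$ and $\rho_i>0$ at which this ratio equals $1/2$; the bound $\area(S_i)\ge \pi\rho_i^2/2$ then forces $\rho_i\to 0$. Rescaling the metric by $\lambda_i:=1/\rho_i$ makes $\area_{\lambda_i^2 g}(S_i\cap B(p_i,1))=\pi/2$, while scale-invariance of the K\"ahler defect keeps the total defect of $S_i$ bounded by $\varepsilon_i\to 0$. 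Pointed Gromov--Hausdorff convergence $(X,p_i,\lambda_i^2 g)\to(\C^n,0,g_{\text{flat}})$ together with varifold compactness would yield a limit varifold $V_\infty$ on $\C^n$ with $\mass(V_\infty\cap B(0,1))\le\pi/2$. Since the K\"ahler defect vanishes in the limit, $V_\infty$ would be pseudoholomorphic, and by classical analytic regularity results (Harvey--Shiffman, King) a complex analytic subvariety whose density at any support point is a positive integer. An approximate monotonicity formula for coarsely $\varepsilon$-holomorphic curves would guarantee $0\in\supp(V_\infty)$, after which the classical monotonicity inequality for complex subvarieties would force $\mass(V_\infty\cap B(0,1))\ge\pi$, contradicting $\le\pi/2$.

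For the second assertion, under the hypothesis that $X$ admits no pseudoholomorphic curve, the same blow-up would handle any subsequence with $\area(S_i)\to 0$, except that instead of a numerical contradiction I would use an Allard-type regularity to upgrade $V_\infty$ to an actual pseudoholomorphic disc in $X$ near $p_\infty$, contradicting the hypothesis. In the complementary case $\area(S_i)\ge\alpha>0$ along a subsequence, I would apply Federer--Fleming compactness of integer-rectifiable currents to $\{[S_i]\}$, obtaining a weak limit $T$ that is closed, of finite mass, and (by the K\"ahler defect estimate $\langle [S_i],\phi\rangle\ge -C(\phi)\varepsilon_i\area(S_i)\to 0$ for every $2$-form $\phi$ non-negative on $J$-invariant oriented planes) a complex cycle; pairing with a pointwise $J$-taming $2$-form $\phi_0$ built by partition of unity would give $\langle T,\phi_0\rangle\ge c\alpha>0$, so $T\ne 0$, and the standard structure theorem for closed integer-rectifiable $J$-complex currents would produce the forbidden pseudoholomorphic curve in $X$.

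The main obstacle I expect is the technical foundation underlying these limits: an approximate monotonicity formula for coarsely $\varepsilon$-holomorphic surfaces controlling area density at small scales despite the pointwise failure of pseudoholomorphicity, the preservation of the K\"ahler defect condition under varifold and current weak limits, and the Allard-type regularity that lifts a local pseudoholomorphic tangent variety in $\C^n$ to an actual pseudoholomorphic disc in $X$. Once those analytical tools are in place, the two contradictions are essentially combinatorial.
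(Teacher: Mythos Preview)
Your blow-up approach to the first assertion (and to case (a) of the second) has a genuine gap that is not merely a missing technical ingredient. Coarse $\varepsilon$-holomorphicity is an \emph{average} condition: it says nothing about the local behavior of $S_i$ near any particular point. Nothing prevents the ``bad'' region where $\KD(TS_i)$ is large from concentrating precisely in the ball $B(p_i,\rho_i)$ you select. After rescaling by $\lambda_i=1/\rho_i$, the total rescaled area $\lambda_i^2\,\area_g(S_i)$ has no upper bound (from $\area_g(S_i\cap B(p_i,\rho_i))=\pi\rho_i^2/2$ you only get $\area_g(S_i)\geq\pi\rho_i^2/2$, not the reverse), so the rescaled defect integral $\int_{S_i}\KD\,\d\vol_{\lambda_i^2 g}\leq\varepsilon_i\,\lambda_i^2\area_g(S_i)$ need not tend to zero. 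Hence the varifold limit $V_\infty$ need not be pseudoholomorphic, and no contradiction ensues. The ``approximate monotonicity formula for coarsely $\varepsilon$-holomorphic surfaces'' you anticipate does not exist in general, precisely because the defining condition carries no scale-localized information; this is not a routine analytic obstacle but the heart of the matter.

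The paper avoids rescaling entirely by a simple global trick. Given $S_i$ with $\area(S_i)<\hbar$ and $\varepsilon_i\to 0$, one chooses positive integers $n_i$ so that $n_i\int_{S_i}\omega\in[\hbar/2,\hbar)$; this is possible since $\int_{S_i}\omega>(1-\varepsilon_i)\area(S_i)>0$. Then $n_iS_i$ are integral cycles with uniformly bounded mass, Federer--Fleming compactness yields an integral limit $T$ with $T(\omega)\geq\hbar/2$, so $T$ is nonzero and semicalibrated by $\omega$. The regularity theory for semicalibrated $2$-currents (Rivi\`ere--Tian; De~Lellis--Spadaro--Spolaor) then forces $T$ to be a $J$-holomorphic cycle, and any nontrivial summand is a pseudoholomorphic curve of area at most $\hbar$, contradicting the classical lower energy bound. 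The same argument with $\hbar$ arbitrary gives the second assertion. Your case (b) for the second assertion---Federer--Fleming plus the structure theorem for integral complex cycles---is essentially this argument; the idea you are missing is that multiplying by integers $n_i$ lets you run it even when $\area(S_i)\to 0$, so no separate blow-up analysis is needed at all.
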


The structure of the paper is as follows. Preliminary statements concerning currents are collected in Section~\ref{Prelim_sec}. The proofs of Theorems~\ref{Gromov_conj_thm}~and~\ref{Approx_thm}, assuming Proposition \ref{key_prp}, are in Section~\ref{Main_Results_sec}. The proof of Proposition~\ref{key_prp} itself is delayed until Section~\ref{Prop_Proof_sec}. It is proved via a succession of approximation lemmas, which themselves are proved using classical topology (in the style of Whitney) and geometric measure theory. In fact, no result proved after $\resultyear$ is used in the proofs of Theorems \ref{Gromov_conj_thm} and \ref{Approx_thm}. Theorem~\ref{lower_energy_bound_thm} is proved in Section~\ref{lower_energy_section} and depends on recent results in the regularity theory of semicalibrated currents.

\subsection*{Acknowledgements}
The author is thankful to Aleksander Doan for explaining his work, to Jiahao Hu for helpful comments, to Dennis Sullivan for many interesting conversations,
and to Aleksey Zinger for his great help in improving this paper over the course of many drafts. The author is also thankful to the anonymous referee for helpful comments.

\section{Preliminaries}\label{Prelim_sec}

In this section, we recall some basics from geometric measure theory. The main reference is \cite[Chapter 4, Section 1]{GMT}. While the results in the references are stated for currents in $\R^n$, by the Nash Embedding Theorem \cite{Nash}, they also apply to currents on manifolds.\\

Let $X$ be a smooth $n$-manifold. For $k \in \Z^{\geq 0}$, we denote by $\Omega^k_c(X)$ the space of compactly supported differential $k$-forms on $X$ equipped with the strong $C^\infty$-topology. A sequence of forms converges in this topology if and only if they are eventually supported in the same compact set and they converge in $C^\infty$. A \textit{k-current} is a continuous linear functional on $\Omega^k_c(X)$. The exterior derivative on forms induces a boundary operator on currents via the equation $$\partial T(\alpha) := T(d\alpha).$$
A \textit{k-cycle} is a $k$-current $T$ on $X$ such that $\partial T = 0$.
The \textit{support} of a $k$-current $T$ on $X$, denoted $\supp(T)$, is the complement of the largest open subset $U \subset X$ such that $T(\omega) = 0$ for all $\omega\in \Omega^{k}_{c}(U)$.
A sequence of $k$-currents $T_i$ on $X$ \textit{converges to a current $T$ in the weak topology on currents} if
$$\lim_{i\to\infty}T_i(\omega) = T(\omega)$$
for all $\omega \in \Omega^k_c(X).$

\begin{dfn}\label{complex_cycle_dfn}
Let $(X,J)$ be a closed almost complex manifold and $\Omega^+(X) \subset \Omega^2_c(X)$ be the space of $2$-forms $\omega$ such that $\omega(v, Jv) > 0$ for all $v \neq 0 \in TX$. A $2$-current $T$ on $X$ is a \textit{complex cycle} if $\partial T = 0$ and $T(\omega) > 0$ for all $\omega \in \Omega^+(X)$.
\end{dfn}

\begin{eg}\label{cur_int_eg}
An oriented $C^1$-submanifold $S$ of dimension $k$ induces a $k$-current, also denoted $S$, via integration. The boundary of the current induced by $S$ is the current induced by the boundary of $S$ as a submanifold. A closed embedded surface in an almost complex manifold is a complex cycle if and only if it is pseudoholomorphic.
\end{eg}

\begin{eg}
An oriented manifold $S$, a $k$-current $T$ on $S$, and a proper smooth function \hbox{$f: S \lra X$} induce a $k$-current $f_*T$, called the \textit{pushforward}, on $X$ via the formula
$$f_*T(\omega) := T(f^*\omega).$$
If $T$ is a cycle, so is $f_*T$.
\end{eg}

\begin{eg}
A continuous function $f$ from a triangulated space $S$ to $\R^n$ is \textit{piecewise smooth} if it is smooth on each face of $S$. If $S$ is compact and oriented, then the sum of the pushforwards of the faces of $S$ under $f$ determines a current $f_*S$ on $\R^n$.
\end{eg}

\begin{eg}\label{polyhedral_eg}
Integration over an oriented affine $k$-simplex in $\R^n$ induces a $k$-current on $\R^n$. An element~$P$ of the real vector space generated by such currents is called a \textit{polyhedral chain} (cf \cite[4.1.22 and 4.1.32]{GMT}). By reversing orientations, the coefficients can always be chosen positive. If $\Delta^k \subset \R^k$ is the standard $k$-simplex, every $k$-polyhedral current can be expressed as $P = \sum_i r_i p_{i*}\Delta^k$, where $r_i > 0$, and $p_i$ are injective affine maps $\Delta^k \lra \R^n$. As $p_i$ are affine, the images of $\Delta^k$ under $p_i$ intersect only in affine simplices. By subdividing, we may assume the images intersect only along the boundary.
\end{eg}

\begin{eg}
A differential $(n-k)$-form $\alpha$ induces a $k$-current via the formula
$$\alpha(\omega) = \int_X \alpha \wedge \omega.$$
The boundary of the current induced by $\alpha$ is the current induced by $\d\alpha$, up to sign. A nonzero form on an almost complex manifold is a complex cycle if it is a closed semipositive $(n-1, n-1)$-form.
\end{eg}


\begin{lmm}[{\cite[Theorem IV.18.14]{deRham}}]\label{constancy_lmm}
If $X$ is a connected $n$-manifold and $T$ is an $n$-cycle on $X$, then $T$ is a constant function.
\end{lmm}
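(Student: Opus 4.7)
My plan is to show that $T$ vanishes on exact compactly supported $n$-forms, invoke the top-degree compactly supported Poincar\'e lemma to conclude that $T$ is locally integration against a constant, and then patch these local constants into a single global one using connectedness of $X$. The only nontrivial ingredient is the classical computation $H^n_c(\R^n;\R)\cong\R$; everything else is routine patching.

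First I would note that $\prt T = 0$ means $T(\nd\alpha) = 0$ for every $\alpha \in \Om^{n-1}_c(X)$, so $T$ descends to a linear functional on compactly supported de Rham cohomology. On a chart $U \subset X$ diffeomorphic to $\R^n$, the top-degree compactly supported Poincar\'e lemma says that $\omega \in \Om^n_c(U)$ admits a primitive in $\Om^{n-1}_c(U)$ if and only if $\int_U\omega=0$. Consequently the restriction of $T$ to $\Om^n_c(U)$ factors through the integration functional $\int_U\colon\Om^n_c(U)\lra\R$, yielding a constant $c_U\in\R$ with $T(\omega) = c_U\int_U\omega$ for every $\omega\in\Om^n_c(U)$.

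Next I would show that these local constants agree across charts: if two such charts $U,V$ meet, then evaluating $T$ on a bump $n$-form supported in $U\cap V$ of integral $1$ forces $c_U=c_V$, and connectedness of $X$ propagates this into a single constant $c$. To recover the global statement, I would take an arbitrary $\omega\in\Om^n_c(X)$, cover $\supp(\omega)$ by finitely many such charts, and use a subordinate partition of unity with linearity of both $T$ and $\int_X$ to reduce to the chart case; this yields $T(\omega)=c\int_X\omega$, which is precisely the assertion that $T$ is the constant function $c$. The main (indeed, only) real content of the argument is the Poincar\'e lemma; orientability, implicit in the paper's application to almost complex manifolds, is what makes the integration maps used in the gluing consistent.
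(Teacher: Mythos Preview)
The paper does not give its own proof of this lemma; it simply cites de~Rham's textbook. Your argument is the standard one and is correct: the compactly supported Poincar\'e lemma identifies $T$ locally with a multiple of integration, and connectedness forces the local constants to agree. Your remark about orientability is apt---without it the overlap comparison would yield $c_U=-c_V$ on charts with opposite orientations and one concludes only that $T=0$, but the paper's sole use of the lemma is for a $2$-dimensional almost complex manifold, which is oriented.
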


A Riemannian metric on $X$ induces a norm $|\cdot|$ on $\Lambda^kT_xX$ for every $x \in X$. This induces a norm~$|\cdot|$ on $\Lambda^k(T^*_xX)$, called the \textit{comass}, via the formula
$$|\omega| := \sup\{ \omega(v) : v \in \Lambda^k(T_xX), v~\textnormal{simple}, |v| \leq 1\}.$$
A subset $U \subset X$ determines a seminorm on $\Omega_c^k(X)$ via the formula
$$||\omega||_U = \sup\bigl\{\big|\omega|_x\big| : x \in U\bigr\}.$$
The \textit{mass} and \textit{flat seminorm with respect to $U$} of a $k$-current $T$ are defined as
\begin{align*}
\mass(T) &:= \sup\{T(\omega) : \omega \in \Omega_c^k(X), ||\omega||_X \leq 1\} \quad \textnormal{and}\\
\flat_U(T) &:= \sup\{T(\omega) : \omega \in \Omega_c^k(X), ||\omega||_U \leq 1, ||\d\omega||_U \leq 1 \},
\end{align*}
respectively.

\begin{eg}
The mass of the current induced by an oriented embedded surface $S$ is the area of~$S$. More generally, the mass of the current induced by an oriented immersed surface $S$ in $X$ with only transverse self-intersections is the area of~$S$.
\end{eg}

The next lemma follows immediately from the relevant definitions.

\begin{lmm}\label{mass_weak_lmm}
If $T_i$ are currents with finite mass converging in the weak topology to a current $T$ with finite mass, then 
$$\mass(T) \leq \liminf\big(\mass(T_i)\big).$$
\end{lmm}

\begin{lmm}[{\cite[Corollary 7.3]{FF}}]\label{flat_implies_weak_lmm}
If $X$ is a compact manifold and $T_i$ is a sequence of currents on $X$ with $\mass(T_i)$ and $\mass(\partial T_i)$ bounded, then $T_i$ converge to a current $T$ in the weak topology on currents if and only if $\flat_X(T_i - T)$ converges to zero.
\end{lmm}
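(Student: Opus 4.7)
The plan is to prove the two implications separately, with the reverse (weak $\Rightarrow$ flat) carrying essentially all of the content.

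For the forward direction I would fix any test form $\omega \in \Omega^k_c(X)$ and note that $C_\omega := \max(\|\omega\|_X, \|\d\omega\|_X)$ is finite, since $X$ is compact and $\omega$ is smooth. Applying the definition of the flat seminorm to $\pm\omega/C_\omega$ yields
$$|(T_i - T)(\omega)| \leq C_\omega \cdot \flat_X(T_i - T),$$
so $\flat_X(T_i - T) \to 0$ implies $T_i(\omega) \to T(\omega)$ for every $\omega$, which is weak convergence. This direction does not use the mass hypotheses.

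For the reverse direction I would argue by contradiction and compactness. Suppose $T_i \to T$ weakly while, passing to a subsequence, $\flat_X(T_i - T) \geq \delta > 0$. The hypothesis $\mass(T_i) \leq M$ combined with Example~\ref{mass_weak_eg} bounds $\mass(T)$; testing weak convergence against exact forms $\d\alpha$ gives $\partial T_i \to \partial T$ weakly, and the same example bounds $\mass(\partial T)$. Hence $T_i - T$ is a sequence of normal currents with uniformly bounded mass and boundary mass. The Federer-Fleming compactness theorem for normal currents \cite{FF} then extracts a further subsequence $T_{i_j} - T$ converging in the flat seminorm to some current $R$. By the forward direction already established, this subsequence also converges weakly to $R$; but by hypothesis it converges weakly to $0$, and weak limits are unique, so $R = 0$. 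This contradicts $\flat_X(T_{i_j} - T) \geq \delta$.

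The main obstacle is invoking the Federer-Fleming compactness theorem, a deep statement whose proof relies on the deformation theorem, polyhedral approximation, and cone constructions. I would treat it as a black box from \cite{FF}; the hypotheses of bounded mass and bounded boundary mass in the lemma are precisely what is needed to activate it, which suggests the lemma is essentially a direct consequence of that compactness statement paired with the trivial flat-implies-weak implication.
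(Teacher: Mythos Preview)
The paper does not supply its own proof of this lemma; it is simply quoted as \cite[Corollary 7.3]{FF} and used as a black box. So there is nothing in the paper to compare your argument against.

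That said, your argument is correct and is essentially how the corollary is obtained in \cite{FF} from the underlying compactness theorem for normal currents. The forward direction is indeed the trivial estimate $|(T_i-T)(\omega)| \leq C_\omega\,\flat_X(T_i-T)$, and for the reverse direction your subsequence-plus-uniqueness argument is the standard one: the mass bounds place $T_i-T$ in a flat-compact set of normal currents, any flat limit must coincide with the weak limit $0$, and this contradicts $\flat_X(T_i-T)\geq\delta$. The only point worth flagging is that you are invoking flat compactness for \emph{normal} (not just integral) currents, which is exactly what \cite{FF} provides; your closing paragraph identifies this correctly.
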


\begin{lmm}[{\cite[4.1.14]{GMT}}]
If $f$ is a Lipschitz function and $T$ is a compactly supported current such that $\mass(T)$ and $\mass(\partial T)$ are both finite, then the pushforward $f_*T$ can be defined. 
\end{lmm}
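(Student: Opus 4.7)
The plan is to define $f_{*}T$ as the limit, in the flat seminorm, of the pushforwards $f_{\delta*}T$ under smooth approximations $f_\delta$ of $f$. Using the Nash Embedding Theorem, reduce to a Euclidean target; fix a smooth mollifier $\eta_\delta$ and set $f_\delta := f \ast \eta_\delta$. These are smooth, uniformly Lipschitz with $\tn{Lip}(f_\delta) \leq \tn{Lip}(f)$, and converge uniformly to $f$ on $\supp(T)$. For each smooth $f_\delta$, the pushforward $f_{\delta*}T(\omega) := T(f_\delta^{*}\omega)$ is defined in the standard way, with uniform bounds
\begin{equation*}
\mass(f_{\delta*}T) \leq \tn{Lip}(f)^k \mass(T), \qquad \mass\bigl(\prt f_{\delta*}T\bigr) \leq \tn{Lip}(f)^{k-1} \mass(\prt T),
\end{equation*}
where $k$ is the dimension of $T$, both obtained from the pointwise comass bound $|f_\delta^{*}\omega| \leq \tn{Lip}(f)^k |\omega|$.

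The crux is to show that $\{f_{\delta*}T\}$ is Cauchy in the flat seminorm as $\delta \to 0$. For $\delta, \delta' > 0$, the smooth linear homotopy $h(t,x) := (1-t)f_\delta(x) + t f_{\delta'}(x)$ on $[0,1] \times X$ satisfies the standard homotopy formula
\begin{equation*}
f_{\delta'*}T - f_{\delta*}T = \prt\bigl(h_{*}([0,1]\times T)\bigr) + h_{*}([0,1]\times \prt T).
\end{equation*}
Testing against a form $\omega$ with $||\omega||_X \leq 1$ and $||\d\omega||_X \leq 1$ and using the uniform bound on the spatial Jacobian of $h$ together with $h(1,\cdot) - h(0,\cdot) = f_{\delta'} - f_\delta$, this yields
\begin{equation*}
\flat_X\bigl(f_{\delta*}T - f_{\delta'*}T\bigr) \leq C \, ||f_\delta - f_{\delta'}||_\infty \bigl(\mass(T) + \mass(\prt T)\bigr),
\end{equation*}
which tends to zero by uniform convergence of the $f_\delta$.

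Combined with the uniform mass and boundary-mass bounds, flat-Cauchyness produces a limit current, which we take as the definition of $f_{*}T$. Example \ref{mass_weak_eg} and Lemma \ref{flat_implies_weak_lmm} then ensure that $f_{*}T$ has finite mass and that the convergence also holds in the weak topology, and the same estimate applied to two different mollifier families confirms independence of the choice. I expect the flat-seminorm estimate to be the main obstacle: the boundary term in the homotopy formula is precisely where the hypothesis $\mass(\prt T) < \infty$ is essential (and without it the argument collapses, which is why the lemma requires $T$ to be normal), and the comass estimates on $f_\delta^{*}\omega$ must be executed with enough care to remain uniform as $\delta \to 0$.
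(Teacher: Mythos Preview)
The paper does not give its own proof of this lemma; it is stated as a citation to Federer's \emph{Geometric Measure Theory} [4.1.14] and used as a black box. Your proposal is essentially the construction Federer carries out there: smooth the Lipschitz map by mollification, push forward by the smooth approximants, and use the affine homotopy formula to show the resulting family is flat-Cauchy with uniform mass and boundary-mass bounds, then pass to the limit. So your approach is correct and coincides with the cited reference; there is nothing in the paper itself to compare against.

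One minor point worth tightening: you invoke Lemma~\ref{flat_implies_weak_lmm} to extract the limit, but that lemma is stated for a compact ambient manifold, whereas after Nash embedding you are in $\R^n$. This is harmless because all currents involved are supported in a fixed compact set (the closure of a neighborhood of $f(\supp T)$), so you can work in a large ball; but you should say so. Also, flat-Cauchyness alone does not immediately produce a limit---you need a compactness input (Banach--Alaoglu on the mass ball, or the compactness of normal currents) to get a weakly convergent subsequence, and then the flat-Cauchy condition upgrades this to convergence of the full family. You gesture at this with the references to Example~\ref{mass_weak_eg} and Lemma~\ref{flat_implies_weak_lmm}, but the logical order should be made explicit.
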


\begin{lmm}[{\cite[4.1.13]{GMT}}]\label{flat_C1_lmm}
Let $X$ and $S$ be manifolds and $T$ be a compactly supported $k$-current on~$S$ such that $\mass(T)$ and $\mass(\partial T)$ are finite. For $K > 0$, let $\mathfrak{C}_K(S,X)$ be the space of smooth functions from $S$ to $X$ with the norms of their Jacobians bounded by $K$, equipped with the $C^0$-topology. The map
$$p: \mathfrak{C}_K(S,X) \lra \big(\Omega_c^k(X)\big)^*, \quad \quad \bigl\{p(f) \bigl\}(\omega) = T(f^*\omega),$$
is continuous with respect to the flat seminorm $\flat_X$.
\end{lmm}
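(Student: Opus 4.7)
The plan is to exploit the chain-homotopy identity for pushforwards, which expresses $g_*T - f_*T$ as a boundary plus a small-mass term whenever $g$ is $C^0$-close to $f$. Fix $f \in \mathfrak{C}_K(S,X)$ and let $\delta$ be smaller than the injectivity radius of $X$ on a neighborhood of the compact set $f(\supp T)$; for any $g \in \mathfrak{C}_K(S,X)$ with $\sup_{s \in S} d(f(s), g(s)) < \delta$, define the geodesic homotopy
$$H: [0,1] \times S \lra X, \qquad H(t,s) := \exp_{f(s)}\!\big(t\exp_{f(s)}^{-1}(g(s))\big),$$
so that $H$ is smooth, $H(0,\cdot) = f$, and $H(1,\cdot) = g$.

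Since $\mass(T)$ and $\mass(\partial T)$ are finite, the product current $[0,1] \times T$ is well-defined and satisfies the product boundary formula $\partial([0,1] \times T) = \{1\} \times T - \{0\} \times T - [0,1] \times \partial T$. Applying $H_*$ and using $\partial H_* = H_* \partial$ yields the deformation identity
$$g_*T - f_*T \;=\; \partial H_*\big([0,1] \times T\big) + H_*\big([0,1] \times \partial T\big),$$
and hence from the definition of $\flat_X$,
$$\flat_X\big(g_*T - f_*T\big) \;\leq\; \mass\big(H_*([0,1] \times T)\big) + \mass\big(H_*([0,1] \times \partial T)\big).$$

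The key estimate is on these two masses. By construction $\|\partial_t H\| \leq C_1 \delta$, and $|D_s H(t,\cdot)| \leq C_2 K$ uniformly in $t$ because $f, g \in \mathfrak{C}_K(S,X)$ and the differential of the exponential map is geometrically bounded. Hence the Jacobian of $H$ on any $(k+1)$-plane of the form $\R\partial_t \oplus V$ with $V \subset T_sS$ a $k$-plane is bounded by $C_3 K^k \delta$, and the standard mass estimate for Lipschitz pushforward gives
$$\mass\big(H_*([0,1]\times T)\big) \leq C_3 K^k \delta \cdot \mass(T), \qquad \mass\big(H_*([0,1] \times \partial T)\big) \leq C_3 K^k \delta \cdot \mass(\partial T).$$
Combining with the previous inequality, $\flat_X(g_*T - f_*T) \to 0$ as $\delta \to 0$, which is continuity of $p$.

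The main obstacle is justifying the product current $[0,1] \times T$, the associated boundary formula, and the Lipschitz pushforward mass bound for an arbitrary current $T$, rather than for an embedded submanifold where they reduce to Stokes' theorem and the usual change-of-variables. All of these are standard provided $\mass(T)$ and $\mass(\partial T)$ are finite (Federer, Chapter~4), but they are the crucial technical inputs. Once they are granted, the geometric bound on the Jacobian of $H$ is a routine calculation.
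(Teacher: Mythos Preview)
The paper does not prove this lemma; it is quoted directly from Federer \cite[4.1.13]{GMT}. Your argument is correct and is essentially Federer's own: the continuity in \cite[4.1.13]{GMT} is deduced from the homotopy formula \cite[4.1.9]{GMT}, which is exactly the identity $g_*T - f_*T = \partial H_*([0,1]\times T) + H_*([0,1]\times\partial T)$ you use, together with the mass bound for pushforwards of normal currents. Your choice of the geodesic homotopy is a convenient concrete realization (Federer works with an abstract affine homotopy in $\R^n$), and the estimate $\|\partial_t H\|\le C_1\delta$, $|D_sH|\le C_2K$ is the right way to extract the factor of $\delta$ in the mass of the cylinder. One small remark: the spatial bound $|D_sH(t,\cdot)|\le C_2K$ should strictly be written $|D_sH(t,\cdot)|\le C_2(K+\delta)$, since differentiating $\exp_{f(s)}^{-1}(g(s))$ in $s$ picks up both $Df$ and $Dg$ together with base-point variation; this is harmless for the conclusion.
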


In particular, $\flat(f_*S)$ is close to $\flat(g_*S)$ for $C^1$-close smooth maps $f$ and $g$. In contrast, it does not follow in general that $\mass(f_*S)$ is close to $\mass(g_*S)$. In order to maintain control on $f_*S$ in this situation, we use the classical notion of the \textit{area of a map}, denoted $\area(f)$, which is 
the integral of the norm of the Jacobian of $f$, denoted $|Df|$, over $S$ (cf.~\cite[Section 3.2.1]{GMT}). The area of an immersion with only transverse self-intersections is the area of the image. The next lemma is established at the end of this section via a standard smoothing argument.

\begin{lmm}\label{smoothing_lmm}
Let $U \subset \R^n$ be an open set. Let $S$ be a closed oriented manifold and
$\gee : S \lra U$
be a piecewise smooth map. For every $\varepsilon > 0$, there exists a smooth map
$f: S \lra U$
such that
\begin{equation}\label{smoothing_bound_eqn}
\flat_U\big( f_*S - \gee_*S \big) < \varepsilon \quad \textnormal{and} \quad 
\area(f) < \area(\gee) + \varepsilon.  
\end{equation}
\end{lmm}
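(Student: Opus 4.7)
The plan is to obtain $f$ by mollifying $g$ after extending it to a tubular neighborhood. Embed $S$ as a smooth closed submanifold of some $\R^N$, fix a tubular neighborhood $V$ of $S$ with smooth retraction $r : V \to S$, and extend $g$ to $\tilde{g} := g \circ r : V \to \R^n$; this remains continuous and piecewise smooth with respect to the pulled-back triangulation. For a standard mollifier $\rho_\eta$ supported in the $\eta$-ball of $\R^N$, set $\tilde{f}_\eta := \tilde{g} * \rho_\eta$ and $f := \tilde{f}_\eta|_S$. Then $f$ is smooth on $S$, and uniform continuity of $\tilde{g}$ on a compact neighborhood of $S$ yields $f \to g$ uniformly on $S$ as $\eta \to 0$; in particular $f(S) \subset U$ for $\eta$ small, since $g(S)$ is compactly contained in $U$.

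For the flat seminorm bound, I would use the linear homotopy $h_t := (1-t)g + tf$, which maps $S \times [0,1]$ into $U$ once $|f - g|_{C^0}$ is small enough. The associated current $H := h_*(S \times [0,1])$ satisfies $\partial H = f_*S - g_*S$, and a direct Jacobian estimate gives
$$\mass(H) \leq |f - g|_{C^0} \cdot L^{\dim S} \cdot \vol(S),$$
where $L$ uniformly bounds the operator norm of the (almost-everywhere defined) derivative of $\tilde{g}$ on a compact neighborhood of $S$. Such an $L$ exists because $g$ is smooth on each closed simplex of its triangulation, hence Lipschitz on the compact $S$; moreover convexity of the operator norm and Jensen's inequality imply that convolution preserves this bound. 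Consequently $\flat_U(f_*S - g_*S) \leq \mass(H)$ is arbitrarily small for $\eta$ small.

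The main obstacle is the area bound. The key observation is that the codimension-one skeleton $\Sigma \subset S$ of the triangulation has measure zero in $S$, and at any $x \in S \setminus \Sigma$ the extension $\tilde{g}$ is $C^1$ in a full $\R^N$-neighborhood of $x$; classical mollification then yields $D\tilde{f}_\eta(x) \to D\tilde{g}(x)$ as $\eta \to 0$. Restricting to $T_xS$ gives $Df(x) \to Dg(x)$ pointwise on $S \setminus \Sigma$, and hence $|Df(x)| \to |Dg(x)|$ by continuity of the Jacobian $A \mapsto \sqrt{\det(A^\top A)}$. Since both Jacobians are bounded uniformly on $S$ by $L^{\dim S}$, dominated convergence gives $\area(f) \to \area(g)$, so $\area(f) < \area(g) + \varepsilon$ for $\eta$ small, and one picks $\eta$ small enough to satisfy both parts of (\ref{smoothing_bound_eqn}) simultaneously. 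The difficulty here is essential: without the $L^\infty$ control on $|Dg|$ provided by piecewise smoothness, the nonlinearity of the Jacobian would prevent one from upgrading the $L^1$-convergence $D\tilde{f}_\eta \to D\tilde{g}$ (which follows from Minkowski's inequality alone) into convergence of $\area(f)$ to $\area(g)$.
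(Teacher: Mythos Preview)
Your proposal is correct and follows essentially the same approach as the paper: embed $S$ in Euclidean space, extend $g$ to an ambient Lipschitz map, mollify, and exploit (i) the uniform Lipschitz bound together with $C^0$-convergence for the flat estimate and (ii) pointwise a.e.\ convergence of derivatives on the complement of the skeleton together with a uniform Jacobian bound for the area estimate. The only cosmetic differences are that the paper invokes Lemma~\ref{flat_C1_lmm} (i.e.\ \cite[4.1.13]{GMT}) in place of your explicit homotopy bound, and handles the area via an Egorov/Lusin-style compact exhaustion of each face rather than the dominated convergence theorem; these are equivalent packagings of the same idea.
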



If the Riemannian metric is compatible with the almost complex structure, the mass is particularly well-behaved. A nondegenerate 2-form~$\omega$ on an almost complex manifold $(X,J)$ is \textit{almost Hermitian} if
$$\omega(v,Jv) > 0 \quad \textnormal{and} \quad \omega(Jv,Jw) = \omega(v,w) \quad \forall v,w \in T_xX, ~x \in X, ~v \neq 0.$$
Such a form $\omega$ induces a Riemannian metric $g := \omega(-,J-)$. The manifold $X$ itself is then also called almost Hermitian.

\begin{lmm}[{\bf{Wirtinger's Inequality} \textnormal{\cite[p40]{GMT}}}]\label{wirtinger_lmm}
Let $(X,J,\omega)$ be an almost Hermitian manifold. For any oriented $2$-plane $\Pi$ in~$TX$,
$$\omega|_\Pi \leq \vol_\Pi$$
with equality if and only if $\Pi$ is $J$-invariant and equipped with the orientation induced by $J$.
\end{lmm}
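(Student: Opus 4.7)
The inequality is pointwise, so I would fix $x\in X$ and work in the Hermitian vector space $(T_xX,J,g,\omega)$. The goal is to show that $\omega(e_1,e_2)\leq 1$ for every oriented orthonormal pair $(e_1,e_2)$ spanning an oriented 2-plane $\Pi$, with equality if and only if $Je_1=e_2$, since then $\omega|_\Pi=\omega(e_1,e_2)\,\vol_\Pi$.

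The plan is to rewrite $\omega$ in terms of the metric $g=\omega(-,J-)$ using the identity $\omega(v,w)=g(Jv,w)$, which follows from $J^2=-\id$ together with the $J$-invariance of $\omega$ (and hence of $g$). Applied to an oriented orthonormal basis $(e_1,e_2)$ of $\Pi$, this gives
\[
\omega(e_1,e_2)\;=\;g(Je_1,e_2).
\]
Since $|Je_1|=|e_1|=1$, the Cauchy--Schwarz inequality immediately yields $|\omega(e_1,e_2)|\leq 1$, and equality holds precisely when $Je_1=\pm e_2$.

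For the equality case, I would analyze the two possibilities. If $Je_1=e_2$, then $Je_2=-e_1$, so $\Pi$ is $J$-invariant, and the basis $(e_1,Je_1)$ agrees with the $J$-orientation; in this case $\omega(e_1,e_2)=g(Je_1,Je_1)=1$. If instead $Je_1=-e_2$, then $\Pi$ is still $J$-invariant but the given orientation is the opposite of the $J$-induced one, and $\omega(e_1,e_2)=-1<1$. In all other cases the strict inequality $\omega(e_1,e_2)<1$ holds, yielding the desired sharp bound.

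There is no real obstacle: the entire argument reduces to Cauchy--Schwarz once the algebraic identity $\omega(v,w)=g(Jv,w)$ is in hand, which is a direct consequence of the compatibility of $(J,g,\omega)$. The only point requiring any care is keeping track of orientations in order to separate the two equality cases cleanly; this is what forces the one-sided (rather than absolute-value) form of the stated inequality.
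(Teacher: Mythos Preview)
Your argument is correct. The paper does not actually supply its own proof of this lemma; it simply states Wirtinger's inequality with a citation to Federer's \emph{Geometric Measure Theory}. Your reduction to Cauchy--Schwarz via the identity $\omega(v,w)=g(Jv,w)$ is exactly the standard elementary proof in the $2$-plane case, and your handling of the equality cases (distinguishing $Je_1=e_2$ from $Je_1=-e_2$ by orientation) is clean. So there is nothing to compare: you have filled in a proof where the paper defers to the literature.
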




\begin{eg}\label{complex_cycles_are_normal_eg}
If $\omega'$ is another 2-form with $||\omega'||_X \leq 1$, then $(\omega - \omega')(v,Jv) \geq 0$ for all $v \in TX$. Thus, $T(\omega) \geq T(\omega')$ if $T$ is a complex cycle as in Definition~\ref{complex_cycle_dfn}. Therefore, the mass of a complex cycle $T$ with respect to an almost Hermitian metric $\omega$ is $T(\omega)$.
\end{eg}


\begin{dfn}\label{kahler_defect_dfn}
Let $(X,J,\omega)$ be an almost Hermitian manifold and $\Pi$ an oriented 2-plane in $TX$. The \textit{Kähler defect} of $\Pi$ is
$$\KD(\Pi) := 1 - \frac{\omega|_\Pi}{\vol_\Pi}.$$
\end{dfn}

Equivalently, $\KD = 1 - \cos(\theta)$, where $\theta$ is the Kähler angle as defined in \cite{Don}.
For any oriented 2-plane~$\Pi \subset TX$,
$0 \leq \KD(\Pi) \leq 2$.
Furthermore, $\KD(\Pi) = 0$ if and only if $\Pi$ is $J$-invariant and equipped with the orientation induced by $J$.
Therefore, the Kähler defect can serve as the distance function $D$ in Definition~\ref{coarse_curve_dfn}. This is the distance function we use for the rest of the paper.

\begin{proof}[{\bf{\emph{Proof of Lemma \ref{smoothing_lmm}}}}]
Let $\phi: \R^m \lra \R$ be a nonnegative, compactly supported smooth function with total integral $1$. This generates a family of smooth functions, called \textit{smoothing kernels} (cf. \cite[4.1.2]{GMT}), via the formula
$$\phi_\delta(x) := \delta^{-m} \phi (\delta^{-1}x).$$
For a continuous function $g$ on $\R^m$ and each $\delta > 0$, the convolution $g_\delta := g * \phi_{\delta}$ is a smooth function on~$\R^m$. As $\delta$ tends to $0$, $g_\delta$ converges uniformly on compact sets to $g$. If $g$ is smooth, $g_\delta$ converges uniformly with all derivatives on compact sets to $g$ as $\delta$ tends to zero. If $g$ is $K$-Lipschitz, a routine calculation shows that $g_\delta$ is $K$-Lipschitz for every $\delta > 0$.\\

Embed $S$ smoothly in $\R^m$. Extend $\gee$ to a compactly supported piecewise smooth map defined on all of $\R^m$. The function $\gee$ is Lipschitz, so $\gee_\delta$ is $K$-Lipschitz for every $\delta > 0$ and some $K$ not dependent on $\delta$. For every $\delta > 0$ sufficiently small, $\gee_\delta$ maps $S$ into $U \subset \R^n$. We now show that $f := \gee_\delta$ satisfies (\ref{smoothing_bound_eqn}) for every $\delta > 0$ sufficiently small. It suffices to check this for each face~$\Delta$ of $S$. By Lemma \ref{flat_C1_lmm}, $f_*\Delta$ is $\flat_U$-close to $\gee_*\Delta$ for every $\delta > 0$ sufficiently small. 
Take a compact~$A \subset \Delta$ such that $\Delta - A$ has area less than $\frac{\varepsilon}{4K^2}$. For every $\delta > 0$ sufficiently small, $\bigl|Df|_A\bigr|$ is within $\frac{\varepsilon}{2\area(A)}$ of $\bigl|D\gee|_A\bigr|$. Also, as $f$ and $g$ are $K$-Lipschitz, $|Df| \leq K^2$ and $|Dg| \leq K^2$. Therefore,
\begin{align*}
\bigl|\area(\gee|_\Delta) - \area(f|_\Delta)\bigr| & = \biggl|\int_\Delta \big(|D\gee| - |Df|\big)\vol\biggr| \\
& \leq \biggl|\int_A\big(|D\gee| - |Df|\big)\vol \biggr| + \biggl| \int_{\Delta -A}\big(|D\gee| - |Df|\big)\vol\biggr| < \varepsilon.
\end{align*}
Therefore, $\delta$ can be chosen so that $f$ satisfies (\ref{smoothing_bound_eqn}).
\end{proof}

\section{Proofs of the Main Results}\label{Main_Results_sec}

In this section, we prove the results stated in Section \ref{Introduction_sec}, except Theorem~\ref{lower_energy_bound_thm}. By Definition~\ref{kahler_defect_dfn}, a surface $S$ is a coarsely holomorphic curve if and only if its surface area is close to the integral of an almost Hermitian form over $S$. That is, coarse holomorphicity is determined entirely by integration of forms and is thus a measure-theoretic condition. Therefore, with some measure-theoretic control on~$S$, the main results of this paper follow quickly. This control is provided by Proposition \ref{key_prp}, which is proved in Section \ref{Prop_Proof_sec}.

\begin{proof}[{\bf{\emph{Proof of Proposition \ref{Rel_cycle_prp}}}}]
The space $\Omega^+(X)$
forms a convex cone in $\Omega_{c}^{2}(X)$. It is open in the strong $C^\infty$-topology. The space $\widetilde{V}$ of all closed 2-forms whose cohomology classes lie in $V$ is a linear subspace. By assumption, $\widetilde{V} \cap \Omega^+(X) = \emptyset$. By the Hahn-Banach Theorem \cite[Theorem II.3.1]{TVS}, there is a closed hyperplane containing $\widetilde{V}$ and disjoint from $\Omega^+(X)$. Taking the quotient by this hyperplane yields a continuous linear functional $T$ on $\Omega_{c}^{2}(X)$ which is positive on $\Omega^+(X)$ and vanishes on $\widetilde{V}$. The functional $T$ is, by definition, a complex cycle such that $\langle h, [T] \rangle = 0$ for all $h \in V$.
\end{proof}

\begin{prp}\label{key_prp}
Let $(X,g)$ be a compact Riemannian manifold of dimension at least 4. For every 2-cycle $T$ on $X$ with finite mass,
there exist sequences $c_i > 0$ of numbers and $S_i$ of closed oriented surfaces embedded in X such that $c_i S_i$ tends to $T$ in the weak topology of currents and
$\mass(c_i S_i)$ tends to~$\mass(T)$.
\end{prp}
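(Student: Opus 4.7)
The strategy is to approximate $T$ through a chain of increasingly geometric objects — polyhedral cycles, piecewise smooth surfaces, smooth maps, and finally embedded surfaces — controlling both the flat norm and the mass at each stage. The upper estimate $\limsup_i \mass(c_i S_i) \leq \mass(T)$ is what the construction establishes; the reverse inequality then follows automatically from the lower semicontinuity of mass under weak convergence (Example \ref{mass_weak_eg}).

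The opening step is polyhedral approximation. Since $T$ has finite mass and vanishing boundary, it is a normal current, and I would invoke the strong polyhedral approximation theorem in geometric measure theory (a consequence of the Federer--Fleming deformation theorem, \cite[4.2.20--4.2.24]{GMT}), applied to a Nash embedding of $X$ into some $\R^N$ with tubular retraction, to obtain polyhedral $2$-cycles $P_i$ supported in $X$ with $\flat(P_i - T) \to 0$ and $\mass(P_i) \to \mass(T)$. Writing $P_i = \sum_j r_j\, p_{j*}\Delta^2$ with $r_j > 0$ as in Example \ref{polyhedral_eg}, I rationalize the coefficients and clear denominators by a large integer $N_i$ to replace $P_i$ by an integer polyhedral cycle $Q_i := N_i P_i$, and set $c_i := 1/N_i$, so $c_i Q_i = P_i$. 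Viewing $Q_i$ as a multiset of oriented affine $2$-simplices whose algebraic boundary vanishes, every $1$-face appears in matched pairs of opposite orientations; pairing them arbitrarily and gluing along paired edges produces a triangulated closed oriented (possibly disconnected) surface $\Sigma_i$ together with a piecewise smooth map $g_i\colon \Sigma_i \to X$ satisfying $g_{i*}\Sigma_i = Q_i$ and $\area(g_i) = \mass(Q_i)$.

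Lemma \ref{smoothing_lmm}, applied face by face in coordinate charts of a tubular neighborhood of $X$, produces a smooth map $f_i\colon \Sigma_i \to X$ with $\flat_X(f_{i*}\Sigma_i - Q_i) < 1/i$ and $\area(f_i) < \mass(Q_i) + 1/i$. Because $\dim X \geq 4$, a $C^1$-small perturbation of $f_i$ is a smooth immersion whose only self-intersections are finitely many transverse double points, by classical transversality in the style of Whitney. Resolving each such double point by excising two small disks and gluing in a thin embedded tube produces a closed oriented embedded surface $S_i \subset X$ whose flat norm and area differ from those of $f_{i*}\Sigma_i$ by less than $1/i$, provided the tubes are chosen thin enough. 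Lemma \ref{flat_implies_weak_lmm} then converts the flat estimate into weak convergence $c_i S_i \to T$, and the chain of area estimates yields $\mass(c_i S_i) = c_i \area(S_i) \to \mass(T)$.

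The main obstacle is securing sharp mass convergence at the polyhedral approximation step: the elementary form of the deformation theorem yields only a bound $\mass(P_i) \leq C\,\mass(T)$ for some universal constant, which would destroy the mass equality one wants at the end. The strong form, which guarantees $\mass(P_i) \to \mass(T)$, is what makes the whole scheme succeed, and I expect much of the technical content of Section \ref{Prop_Proof_sec} to consist in either establishing or adapting this approximation to the setting at hand. Careful accounting is also required in the tube-surgery step to verify that the added handles do not inflate the area beyond the prescribed tolerance, but this becomes routine once the tube thickness is controlled.
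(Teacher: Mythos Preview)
Your plan is correct and mirrors the paper's proof almost exactly: Nash-embed, apply strong polyhedral approximation with mass control (Lemma~\ref{polyhedral_lmm}, via \cite[4.2.24]{GMT} and the deformation theorem), rationalize coefficients, parameterize the integer cycle by a closed surface (Lemma~\ref{parameterize_lmm}), smooth via Lemma~\ref{smoothing_lmm}, retract to $X$, perturb to a Whitney immersion, and resolve double points by an explicit tube surgery (Lemma~\ref{surgery_lmm}). The only minor discrepancies are ordering---the paper retracts to $X$ \emph{after} smoothing rather than before---and that the rationalization step needs the observation that the cycle condition on a polyhedral chain is an integer linear system, so rational solutions are dense.
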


\begin{proof}[{\bf{\emph{Proof of Theorem \ref{Approx_thm}}}}]
This follows immediately from Lemma \ref{constancy_lmm} when the dimension of $X$ is 2, so we assume the dimension of $X$ is at least 4. By Example \ref{complex_cycles_are_normal_eg}, the cycle $T$ has finite mass. Therefore, by Proposition \ref{key_prp}, there exist sequences $c_i > 0$ and of closed oriented surfaces $S_i$ embedded in $X$ such that $c_i S_i$ tends to $T$ in the weak topology of currents and
$\mass(c_i S_i)$ tends to~$\mass(T)$. Therefore,

$$\lim_{i\to\infty}c_i \int_{S_i} \omega = T(\omega) = \mass(T) = \lim_{i\to\infty} \mass(c_i S_i) = \lim_{i\to\infty} c_i \int_{S_i} \vol \,.$$
So,
$$\lim_{i\to\infty} c_i \int_{S_i} (\vol - \omega) = 0.$$
As $\lim_{i\to\infty}c_i \,\area (S_i) = T(\omega) \neq 0$,
$$0 = \lim_{i\to\infty}\frac{c_i \int_{S_i} (\vol - \omega)}{c_i \int_{S_i} \vol} = \lim_{i\to\infty} \frac{\int_{S_i} \KD\, \vol}{\int_{S_i} \vol} \,.$$
Therefore, $S_i$ is coarsely $\varepsilon_i$-holomorphic for a sequence $\varepsilon_i > 0$ tending to zero.
\end{proof}

\begin{proof}[{\bf{\emph{Proof of Corollary \ref{Existence_crl}}}}]
By Proposition \ref{Rel_cycle_prp} with $V=0$, every closed almost complex manifold contains a complex cycle $T \neq 0$. By Theorem \ref{Approx_thm}, $T$ is approximated by coarsely $\varepsilon_i$-holomorphic curves for a sequence $\varepsilon_i > 0$ tending to zero.
\end{proof}

\begin{proof}[{\bf{\emph{Proof of Theorem \ref{Gromov_conj_thm}}}}]
The ``only if'' direction is clear. Therefore, assume that the cohomology class $h \in H^2(X, \R)$ contains no symplectic form taming $J$, but satisfies (\ref{area_bound_eqn}). Then, by Corollary~\ref{Existence_crl}, $-h$ does not contain a taming symplectic form, so the linear subspace in $H^2(X,\R)$ generated by $h$ does not contain the class of a taming symplectic form. By Proposition \ref{Rel_cycle_prp}, there exists a complex cycle $T \neq 0$ on $X$ such that $\langle h, [T] \rangle = 0$. Therefore, by Theorem \ref{Approx_thm}, there exist sequences $\varepsilon_i >0$, $c_i > 0$, and of $S_i$ such that $S_i$ is a coarsely $\varepsilon_i$-holomorphic curve, $\varepsilon_i$ tends to zero,
$$\lim_{i\to\infty} c_i \langle h, [S_i] \rangle = 0, \quad \textnormal{and} \quad \lim_{i\to\infty} c_i~\area(S_i) \neq 0.$$
Therefore,
$$\lim_{i\to\infty}\frac{\langle h, [S_i] \rangle}{\area(S_i)} = 0,$$
contradicting the assumption that $h$ satisfies (\ref{area_bound_eqn}).
\end{proof}

\begin{proof}[{\bf{\emph{Proof of Corollary \ref{symp_cone_crl}}}}]
The ``only if'' direction is clear. Therefore, suppose there exists $\varepsilon > 0$ such that the cohomology class $h \in H^2(X, \R)$ is strictly positive on coarsely $\varepsilon$-holomorphic curves. Take a symplectic form $\omega$ on $X$ which tames $J$. By the finite dimensionality of $H_2(X,\R)$, there exists $C > 0$ such that $\langle [\omega], [S] \rangle \leq C\langle h, [S] \rangle$ for all coarsely $\varepsilon$-holomorphic curves $S$. Thus,
$$\area(S) \leq C'\langle [\omega], [S] \rangle \leq C'C \langle h, [S] \rangle,$$
where $C'$ is the constant from Theorem~\ref{Gromov_conj_thm} applied to $\omega$. Therefore, $h$ contains a symplectic form taming $J$ by Theorem \ref{Gromov_conj_thm}.
\end{proof}

\section{Approximation of Cycles}\label{Prop_Proof_sec}

In this section, we prove Proposition~\ref{key_prp}. The argument proceeds by approximating a 2-cycle with finite mass by a sequence of nicer geometric objects. It consists mostly of classical arguments, interspersed with concrete constructions. In particular, Lemma~\ref{polyhedral_lmm} follows from the Federer-Fleming Deformation Theorem and Lemma~\ref{parameterize_lmm} is a simple desingularization procedure.
These results and Lemma \ref{smoothing_lmm} yield Corollary~\ref{smooth_approx_crl}. Lemma~\ref{surgery_lmm} is an explicit construction of a surgery. The proof of Proposition~\ref{key_prp} uses the $C^1$~Nash Embedding Theorem to apply Corollary~\ref{smooth_approx_crl} and concludes with the Whitney Immersion Theorem and Lemma~\ref{surgery_lmm}. The only part of the argument that doesn't generalize immediately to the case of an $n$-cycle with finite mass in a manifold of dimension at least $2n$ is Lemma~\ref{parameterize_lmm}. This result could likely also be generalized, for instance using \cite[Théorème II.4]{Thom}, but this would take us too far afield from our current goals.

\begin{lmm}\label{polyhedral_lmm}
Let $U \subset \R^n$ be a precompact open set. Let $T$ be a 2-cycle on $\R^n$ with finite mass and support contained in $U$. For every $\varepsilon > 0$, there exists a polyhedral 2-cycle $P$ on $\R^n$ with rational coefficients and support contained in $U$ such that
$$\flat_U(P - T) < \varepsilon \quad \textnormal{and} \quad \mass(P) < \mass(T) + \varepsilon.$$
\end{lmm}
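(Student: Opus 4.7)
The plan is to apply the Federer--Fleming Deformation Theorem on a sufficiently fine cubical grid and then rationalize the coefficients at the end. Fix a mesh $\delta > 0$ small enough that the $2n\delta$-neighborhood of $\supp(T)$ is contained in $U$. Applied to the cycle $T$, the deformation theorem produces a real polyhedral 2-chain $P_0$ supported on the 2-faces of the grid, together with a 3-current $Q$, such that $T - P_0 = \partial Q$, $\mass(Q) \leq C_n \delta \, \mass(T)$, and $\supp(P_0) \cup \supp(Q) \subset U$. Since $\partial T = 0$, the deformation automatically produces a cycle, i.e.\ $\partial P_0 = 0$. The flat bound $\flat_U(T - P_0) \leq \mass(Q) \leq C_n \delta \, \mass(T)$ is then smaller than $\varepsilon/2$ for $\delta$ small.

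The hard part is the mass bound, since the standard deformation theorem only yields $\mass(P_0) \leq C_n \mass(T)$ with a dimensional constant $C_n$, which is much weaker than $\mass(T) + \varepsilon/2$. To close the gap, I would invoke the classical shift-and-average trick. Let $P_\tau$ denote the deformation of $T$ onto the grid translated by $\tau \in [0,\delta)^n$. A Fubini computation on the radial projections from cube centers that define the deformation gives a bound on the translation average of the form
\begin{equation*}
\delta^{-n} \int_{[0,\delta)^n} \mass(P_\tau) \, d\tau \;\leq\; \mass(T)\bigl(1 + O(\delta)\bigr),
\end{equation*}
so for some $\tau$ one has $\mass(P_\tau) \leq \mass(T) + \varepsilon/2$, while the flat bound $\flat_U(T - P_\tau) \leq C_n\delta \, \mass(T)$ is uniform in $\tau$. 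Fix such a $\tau$ and denote the resulting real polyhedral 2-cycle by $P$.

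Finally, rationalize the coefficients. The 2-polyhedral chains supported on the finitely many shifted grid 2-faces inside $U$ form a finite-dimensional real vector space $V$, and the boundary operator $\partial : V \to V'$ is defined over $\Z$. Hence $\ker \partial \subset V$ is cut out by rational linear equations, so its $\Q$-points are dense. Both $\mass$ and $\flat_U$ are continuous in the coefficient vector, so a sufficiently small rational perturbation $P' \in \ker \partial$ of $P$ satisfies $\mass(P') < \mass(P) + \varepsilon/2$ and $\flat_U(P - P') < \varepsilon/2$, and $\supp(P') \subset U$ by construction. The triangle inequality then gives the required $\flat_U(P' - T) < \varepsilon$ and $\mass(P') < \mass(T) + \varepsilon$. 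The main obstacle is the averaging argument in the middle paragraph; the initial deformation and the final rationalization are standard.
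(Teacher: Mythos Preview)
Your overall strategy differs from the paper's, and the step you yourself flag as ``the main obstacle'' is a genuine gap: averaging the Federer--Fleming deformation over grid translations does \emph{not} give $\delta^{-n}\int_{[0,\delta)^n}\mass(P_\tau)\,d\tau \leq \mass(T)(1+O(\delta))$. Already for a $1$-current in $\R^2$ the computation fails. Take a horizontal segment of length $\delta$ at height $y\in[0,\delta/2)$ inside the square $[0,\delta]^2$; the radial projection from the center $(\delta/2,\delta/2)$ to $\partial[0,\delta]^2$ sends it to the full bottom edge (length $\delta$) together with two vertical pieces of length $y$ on the left and right edges, so the projected length is $\delta+2y$. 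Averaging $1+2y/\delta$ over $y\in[0,\delta)$ (using the symmetry $y\leftrightarrow\delta-y$) gives $3/2$, independent of $\delta$. In higher codimension the deformation composes $n-2$ such radial projections and the constant only gets worse. The point is that radial projection from a cube center is genuinely area-increasing near the center, and this does not average out to the identity; it averages out precisely to the dimensional constant $C_n>1$ in the deformation theorem.

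The paper sidesteps this by a two-stage argument. First it quotes \cite[4.2.24]{GMT}, which already produces polyhedral \emph{chains} (not cycles) $P'_i$ supported in $U$ with $P'_i\to T$ weakly and $\limsup\bigl(\mass(P'_i)+\mass(\partial P'_i)\bigr)\leq \mass(T)$; the sharp mass control is built into that theorem, whose proof is considerably more delicate than a translation average. Then, to turn these chains into cycles, the paper writes $P'_i-T=G_i+\partial K_i$ with $\mass(G_i)\to 0$ (via \cite[Theorem~7.1]{FF}) and applies the ordinary deformation theorem only to the small error $G_i$. Since $\mass(G_i)$ is already tending to $0$, the bad constant $C_n$ is harmless there, and subtracting the resulting small polyhedral piece from $P'_i$ yields a polyhedral cycle with both the flat and the mass bound. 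Your final rationalization step is correct and is exactly what the paper does.
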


\begin{proof}
By \cite[4.2.24]{GMT}, there is a sequence $P'_i$ of polyhedral $2$-chains supported inside $U$ such that $P'_i$ tends to $T$ in the weak topology and $$\limsup\big(\mass(P'_i) + \mass(\partial P'_i)\big) \leq \mass(T).$$
By Lemma \ref{mass_weak_lmm}, $\mass(T) \leq \liminf \big( \mass(P'_i)\big)$. It follows that $\mass(P'_i)$ tends to $\mass(T)$ and thus that $\mass(\partial P'_i)$ tends to $0$.
Therefore, by \cite[Theorem 7.1]{FF}, there exist currents $G_i$ and $K_i$ supported inside $U$ such that
$$ G_i + \partial K_i = P'_i - T$$ and $\mass(G_i)$ tends to $0.$
Taking the boundary of each side shows $\partial G_i = \partial P'_i$ and so $\partial G_i$ is polyhedral. By the Federer-Fleming Deformation Theorem \cite[4.2.9]{GMT},
$$G_i = P_i + Q_i + \partial S_i,$$ where $P_i$, $Q_i$, and $\partial S_i$ are supported in $U$, $P_i$ and $Q_i$ are polyhedral chains, $\mass(P_i)$ can be taken to be less than a constant multiple of $\mass(G_i)$, and $\mass(Q_i)$ can be taken arbitrarily small. Therefore, $\mass(P_i + Q_i)$ tends to $0$. Again, taking the boundary of each side shows $\partial G_i = \partial (P_i + Q_i)$. Therefore, $P'_i - (P_i + Q_i)$ are polyhedral cycles supported inside $U$ converging in the weak topology to $T$ such that $\mass\big(P'_i - (P_i + Q_i) \big)$ tends to $\mass(T)$. By Lemma \ref{flat_implies_weak_lmm}, $P'_i - (P_i + Q_i)$ tends to $T$ in the seminorm $\flat_U$.\\

The condition on the coefficients for a polyhedral chain to be a cycle is a linear system over the integers. Therefore, the rational solutions are dense in the real solutions and $P'_i - (P_i + Q_i)$ can be chosen to have rational coefficients. Taking $i$ large enough yields a polyhedral $2$-cycle $P$ with the stated properties.
\end{proof}
\begin{lmm}\label{parameterize_lmm}
Let $U \subset \R^n$ be an open set. Let $P$ be a polyhedral 2-cycle on $\R^n$ with integer coefficients and support contained in $U$. Then, there exist a closed oriented surface $S$ and a piecewise linear map
$\gee : S \lra U$
such that
$$\gee_*S = P \quad \textnormal{and} \quad \area(\gee) = \mass(P).$$
\end{lmm}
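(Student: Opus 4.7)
The strategy is to realize $P$ as the pushforward of an explicit surface obtained by taking $r_i$ abstract copies of $\Delta^2$ for each triangle in a standard decomposition of $P$ and gluing them in pairs along edges in the manner dictated by $\partial P = 0$. To begin, I would put $P$ into standard form: by Example \ref{polyhedral_eg}, after passing to a common refinement and reversing orientations, $P$ can be written as $\sum_{i=1}^N r_i\, p_{i*}\Delta^2$ where the $p_i: \Delta^2 \to \R^n$ are injective affine maps whose images intersect only along boundary faces, with $r_i\in\Z_{>0}$ because the integer-coefficient hypothesis on $P$ is preserved under refinement. Each $p_i(\Delta^2)$ lies in $\supp(P)\subset U$. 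Take abstract copies $T_{i,k}$ of $\Delta^2$ for $i\in\{1,\ldots,N\}$ and $1\le k\le r_i$, each mapping into $U$ via the affine map $p_i$.

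Then for each edge $e$ in the 1-skeleton of the polyhedral decomposition, $\partial P = 0$ reads
$$\sum_{(i,k):\, e \subset p_i(\partial\Delta^2)} \varepsilon_{i,e}\, [e] = 0,$$
where $\varepsilon_{i,e}\in\{\pm 1\}$ records whether the boundary orientation that $p_i$ induces on $e$ agrees with a fixed reference orientation. The triangle copies $T_{i,k}$ incident to $e$ therefore split into two families of equal size. I would choose an arbitrary bijection between them and glue each matched pair along $e$ via the unique affine identification of the corresponding abstract edges; the opposite induced orientations on $e$ make these gluings orientation-compatible, so the resulting 2-complex $\tilde S$ carries a coherent orientation away from its 0-skeleton, and every edge of $\tilde S$ is incident to exactly two triangles.

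Finally I would resolve the vertices. At each vertex $v$ of $\tilde S$, the edge-incidence-two property forces the link of $v$ to be a 1-complex in which every vertex has degree $2$, hence a disjoint union of cycles. Separating $v$ into one new vertex per cyclic component of its link turns $v$ into finitely many manifold points, so the resulting space $S$ is a closed oriented surface. The piecewise linear map $g: S \to U$ agreeing with $p_i$ on each $T_{i,k}$ then satisfies $g_* S = \sum_i r_i p_{i*}\Delta^2 = P$, and since the images $p_i(\Delta^2)$ overlap only on their boundaries (a $2$-dimensional null set), $\area(g) = \sum_i r_i\, \area\bigl(p_i(\Delta^2)\bigr) = \mass(P)$. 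The delicate step is the vertex resolution: the edge-incidence-two property is precisely what forces each link to be a disjoint union of circles, so that separating vertices yields honest manifold points rather than non-manifold singularities.
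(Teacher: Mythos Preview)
Your proof is correct and follows essentially the same route as the paper's: write $P$ in standard form, take one abstract copy of $\Delta^2$ per unit of multiplicity, and glue edges in opposite-orientation pairs dictated by $\partial P=0$. Your vertex-resolution step is in fact superfluous: in the edge-gluing quotient each vertex of $\tilde S$ is by definition a connected component of the $2$-regular graph on original triangle corners (each corner has two edges, each paired with exactly one other), hence already a single cycle, so its link is automatically a circle --- this is exactly what the paper asserts directly.
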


\begin{proof}
As in Example \ref{polyhedral_eg}, let $P$ be represented as $\sum r_ip_{i*}\Delta^2$, where $r_i$ are positive integers, $p_i$ are injective affine maps $\Delta^2 \lra \R^n$, and the images of $\Delta^2$ under $p_i$ intersect only along the boundary. Let $\mathcal{A}_k$ denote the set with $k := \sum r_i$ elements and let the map
$$p: \mathcal{A}_k \times \Delta^2 \lra \R^n$$
act as $p_1$ on the first $r_1$ copies of $\Delta^2$, as $p_2$ on the next $r_2$ copies of $\Delta^2$, and so on.
It is clear by construction that
$\area(p') = \mass(P)$.\\

\tikzset{every picture/.style={line width=0.75pt}} 
\begin{figure}
\centering
\begin{tikzpicture}[x=0.75pt,y=0.75pt,yscale=-1,xscale=1]

\draw   (300,70) -- (300,110) -- (260,50) -- cycle ;
\draw   (330,70) -- (330,110) -- (370,50) -- (336.78,66.61) -- cycle ;
\draw   (240,80) -- (280,100) -- (240,40) -- cycle ;
\draw   (390,80) -- (350,100) -- (390,40) -- cycle ;
\draw   (310,140) -- (310,180) -- (270,160) -- cycle ;
\draw   (320,140) -- (320,180) -- (360,160) -- cycle ;
\draw   (310,130) -- (270,110) -- (270,150) -- cycle ;
\draw   (360,110) -- (360,150) -- (320,130) -- cycle ;
\draw   (460,190) -- (460,230) -- (420,170) -- cycle ;
\draw   (460,190) -- (460,230) -- (500,170) -- cycle ;
\draw   (420,210) -- (460,230) -- (420,170) -- cycle ;
\draw   (500,210) -- (460,230) -- (483.73,194.41) -- (500,170) -- cycle ;
\draw  [dash pattern={on 0.84pt off 2.51pt}] (500,170) -- (500,210) -- (460,190) -- cycle ;
\draw  [dash pattern={on 0.84pt off 2.51pt}] (460,190) -- (460,230) -- (500,210) -- cycle ;
\draw  [dash pattern={on 0.84pt off 2.51pt}] (460,190) -- (420,170) -- (420,210) -- cycle ;
\draw  [dash pattern={on 0.84pt off 2.51pt}] (460,190) -- (460,230) -- (420,210) -- cycle ;
\draw   (160,190) -- (160,230) -- (120,170) -- cycle ;
\draw   (170,190) -- (170,230) -- (210,170) -- cycle ;
\draw   (120,210) -- (160,230) -- (120,170) -- cycle ;
\draw   (210,210) -- (170,230) -- (193.73,194.41) -- (210,170) -- cycle ;
\draw  [dash pattern={on 0.84pt off 2.51pt}] (210,170) -- (210,210) -- (170,190) -- cycle ;
\draw  [dash pattern={on 0.84pt off 2.51pt}] (170,190) -- (170,230) -- (210,210) -- cycle ;
\draw  [dash pattern={on 0.84pt off 2.51pt}] (160,190) -- (120,170) -- (120,210) -- cycle ;
\draw  [dash pattern={on 0.84pt off 2.51pt}] (160,190) -- (160,230) -- (120,210) -- cycle ;
\draw    (250,200) -- (378,200) node[midway, below] {$g$};
\draw [shift={(380,200)}, rotate = 180] [color={rgb, 255:red, 0; green, 0; blue, 0 }  ][line width=0.75]    (10.93,-3.29) .. controls (6.95,-1.4) and (3.31,-0.3) .. (0,0) .. controls (3.31,0.3) and (6.95,1.4) .. (10.93,3.29)   ;
\draw    (240,130) -- (171.84,159.21) node[midway, above] {$\pi$};
\draw [shift={(170,160)}, rotate = 336.8] [color={rgb, 255:red, 0; green, 0; blue, 0 }  ][line width=0.75]    (10.93,-3.29) .. controls (6.95,-1.4) and (3.31,-0.3) .. (0,0) .. controls (3.31,0.3) and (6.95,1.4) .. (10.93,3.29)   ;
\draw    (390,130) -- (458.16,159.21) node[midway,above] {$p$};
\draw [shift={(460,160)}, rotate = 203.2] [color={rgb, 255:red, 0; green, 0; blue, 0 }  ][line width=0.75]    (10.93,-3.29) .. controls (6.95,-1.4) and (3.31,-0.3) .. (0,0) .. controls (3.31,0.3) and (6.95,1.4) .. (10.93,3.29)   ;

\node at (100,200) {$S$};
\node at (520,200) {$P$};
\node at (316,50) {$\mathcal{A}_k \times \Delta^2$};

\end{tikzpicture}
\hspace{3mm}
\caption{Construction of a smooth surface parameterizing an integer polyhedral cycle}
\end{figure}
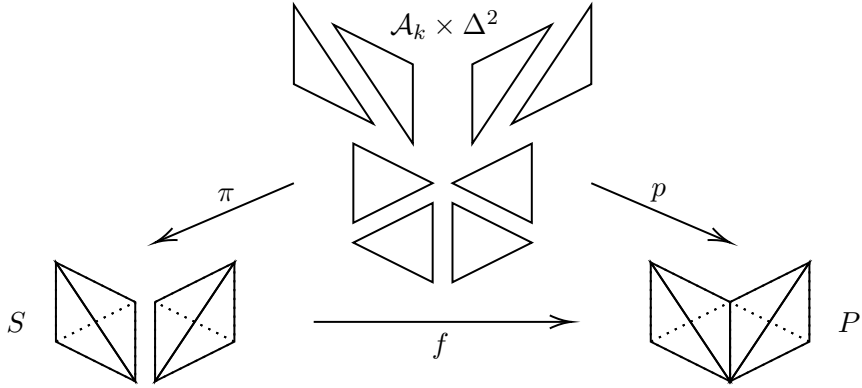

We glue the edges of $\mathcal{A}_k \times \Delta^2$ together to construct a closed, oriented surface $S$ such that $p$ factors through the quotient map $\pi: \mathcal{A}_k \times \Delta^2 \lra S$ as follows:
\[
 \begin{tikzcd}[column sep=small]
& \mathcal{A}_k \times \Delta^2 \arrow{dl}[swap]{\pi} \arrow[dr, "p"] & \\
  S \arrow{rr}[swap]{\gee} &                         & U
\end{tikzcd}
\]
As $P$ is a cycle, for every $i \leq k$ and every edge $e$ of $\{ i\} \times \Delta^2$, there is a $j \leq k$ and edge $e'$ of $\{ j \}\times \Delta^2$ such that $p_*e = -p_*e'$. Therefore, $p_j^{-1} \circ p_i$ is an orientation-reversing affine map from $e$ to $e'$. We associate the edges of $\mathcal{A}_k \times \Delta^2$ in pairs along such maps to construct $S$. All that remains to show is that $S$ is a manifold, as all of its other claimed properties are clear by construction. The surface $S$ is clearly locally Euclidean in the interior of each face. As every edge in $S$ lies in two faces, $S$ is locally Euclidean in the interior of each edge. Lastly, by construction, the link of each vertex is connected, and is therefore a circle. Thus, $S$ is locally Euclidean at each vertex.
\end{proof}

\begin{crl}\label{smooth_approx_crl}
Let $U \subset \R^n$ be a precompact open set. Let $T$ be a 2-cycle on $\R^n$ with finite mass and support contained in $U$. For every $\varepsilon > 0$, there exist a closed, oriented surface $S$, a smooth map
$f : S \lra U,$
and a positive constant $c$ such that
$$\flat_U\big(c \cdot f_*S - T\big) < \varepsilon \quad \textnormal{and} \quad c \cdot \area(f) < \mass(T) + \varepsilon.$$
\end{crl}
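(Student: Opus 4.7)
The plan is to chain the three preceding results in order, using the freedom to scale by a constant $c$ to absorb the clearing of denominators. First, I would apply Lemma \ref{polyhedral_lmm} with a parameter $\varepsilon_1 > 0$ to obtain a polyhedral 2-cycle $P$ with rational coefficients and support in $U$ satisfying $\flat_U(P - T) < \varepsilon_1$ and $\mass(P) < \mass(T) + \varepsilon_1$. Clearing denominators, I would choose a positive integer $N$ so that $P' := NP$ is an integer polyhedral 2-cycle supported in $U$ with $\mass(P') = N\mass(P)$.

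Next, I would apply Lemma \ref{parameterize_lmm} to $P'$ to produce a closed oriented surface $S$ and a piecewise linear map $g : S \to U$ with $g_*S = P'$ and $\area(g) = \mass(P')$. Since piecewise linear is in particular piecewise smooth, Lemma \ref{smoothing_lmm} applies with a second parameter $\varepsilon_2 > 0$ to yield a smooth map $f : S \to U$ satisfying $\flat_U(f_*S - g_*S) < \varepsilon_2$ and $\area(f) < \area(g) + \varepsilon_2$.

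Setting $c := 1/N$, one has $c \cdot g_*S = P$ and $c \cdot \area(g) = \mass(P)$. Since $\flat_U$ is positively homogeneous and satisfies the triangle inequality,
$$\flat_U(c \cdot f_*S - T) \leq c\,\flat_U(f_*S - g_*S) + \flat_U(P - T) < c\varepsilon_2 + \varepsilon_1,$$
and
$$c \cdot \area(f) < c \cdot \area(g) + c\varepsilon_2 = \mass(P) + c\varepsilon_2 < \mass(T) + \varepsilon_1 + c\varepsilon_2.$$

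The only subtlety is bookkeeping. Given the target $\varepsilon > 0$, I would first fix $\varepsilon_1 < \varepsilon/2$, which determines $P$, $N$, and hence $c$, and only then choose $\varepsilon_2 > 0$ small enough that $c\varepsilon_2 < \varepsilon/2$. Both required bounds then follow. There is no real obstacle here: the argument is essentially a three-step composition of the preceding lemmas, and the main point of care is the order of choices, since the scaling factor $c = 1/N$ that multiplies the smoothing error $\varepsilon_2$ is not known until after the polyhedral approximation step has been carried out.
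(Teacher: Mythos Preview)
Your proposal is correct and follows essentially the same approach as the paper: apply Lemma~\ref{polyhedral_lmm}, clear denominators to get an integer polyhedral cycle, parameterize via Lemma~\ref{parameterize_lmm}, and smooth via Lemma~\ref{smoothing_lmm}, with the constant $c$ equal to the reciprocal of the clearing integer. The paper makes the same order-of-choice observation implicitly by applying Lemma~\ref{smoothing_lmm} with parameter $\varepsilon/(2c)$ only after $c$ has been fixed by the polyhedral step.
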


\begin{proof}
By Lemma \ref{polyhedral_lmm}, there exists a polyhedral 2-cycle with rational coefficients $P$ with support contained in $U$ such that
$$\flat_U(P - T) < \frac{\varepsilon}{2} \quad \textnormal{and} \quad \mass(P) < \mass(T) + \frac{\varepsilon}{2}.$$
As the coefficients of $P$ are rational, $P = cP'$ for $c$ the reciprocal of some positive integer and $P'$ a polyhedral 2-cycle with integer coefficients. By Lemma \ref{parameterize_lmm}, there exist a closed oriented surface~$S$ and a piecewise linear map
$\gee : S \lra U$
such that
$$\gee_*S = P' \quad \textnormal{and} \quad \area(\gee) = \mass(P').$$
By Lemma \ref{smoothing_lmm}, there exists a smooth map
$f: S \lra U$
such that
$$\flat_U(f_*S - \gee_*S) < \frac{\varepsilon}{2c} \quad \textnormal{and} \quad \area(f) < \area(\gee) + \frac{\varepsilon}{2c}.$$
Therefore,
$$\flat_U\big(c \cdot f_*S - T\big) \leq \flat_U\big(c \cdot f_*S - c \cdot \gee_*S\big) + \flat_U\big(c \cdot \gee_*S - P\big) + \flat_U(P - T) < \varepsilon,$$
$$c \cdot \area(f) < c \cdot \area(\gee) + \frac{\varepsilon}{2} = \mass(P) + \frac{\varepsilon}{2} < \mass(T) + \varepsilon,$$
as needed.
\end{proof}

\begin{lmm}\label{surgery_lmm}
Let $(X,g)$ be a Riemannian manifold of dimension at least 4. For every immersed oriented surface $S \subset X$ with only transverse self-intersections and every $\varepsilon > 0$, there exists
an embedded oriented surface $\widetilde{S} \subset X$ such that
\begin{equation}\label{surgery_eqn}
\flat_X\big(\widetilde{S} - S\big) < \varepsilon \quad \textnormal{and} \quad \area\big(\widetilde{S}\,\big) < \area(S) + \varepsilon.
\end{equation}
\end{lmm}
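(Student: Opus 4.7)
The dimension hypothesis is used essentially: for two $2$-planes in $T_pX$ to span $T_pX$, one needs $\dim X \leq 4$. So if $\dim X \geq 5$, the hypothesis ``only transverse self-intersections'' forces $S$ to be embedded, and we may take $\widetilde{S} = S$. Assume therefore $\dim X = 4$. The self-intersections of $S$ are then isolated and, by compactness of $S$, finite in number, say $p_1,\ldots,p_k$.

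Around each $p_j$ I would choose a geodesic normal-coordinate ball $B_j$ of radius $r_j$, the $B_j$ pairwise disjoint and each small enough that $S \cap B_j$ is the union of two embedded disks $D_{A,j}$ and $D_{B,j}$ meeting only at $p_j$. After a linear change of coordinates in the chart, the tangent planes of these disks at $p_j$ may be identified with $\R^2 \times \{0\}$ and $\{0\} \times \R^2$ in $\R^4$. The surgery consists of removing the interiors of $D_{A,j}$ and $D_{B,j}$ and gluing in the explicit embedded annulus
\begin{equation*}
\gamma_j(\theta,t) \,:=\, r_j\bigl((1-t)\cos\theta,\,(1-t)\sin\theta,\,t\cos\theta,\,\pm t\sin\theta\bigr), \qquad (\theta, t) \in S^1 \times [0,1],
\end{equation*}
where the sign is chosen so that the induced boundary orientation of $\gamma_j$ matches that of $D_{A,j} \cup D_{B,j}$. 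Injectivity of $\gamma_j$ follows from a direct computation, and its Euclidean area is $O(r_j^2)$ with constant independent of $j$; since normal coordinates distort the metric only to higher order, the Riemannian area is also $O(r_j^2)$. Piecing these local modifications together produces a globally embedded oriented surface $\widetilde{S}$ that coincides with $S$ outside $\bigcup_j B_j$.

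To verify (\ref{surgery_eqn}), note that $\widetilde{S} - S$ is supported in $\bigcup_j B_j$ and, as a signed current, has mass at most $\sum_j \bigl(\area(\gamma_j) + \area(D_{A,j}) + \area(D_{B,j})\bigr) \leq C\sum_j r_j^2$; the same sum also bounds $\bigl|\area(\widetilde{S}) - \area(S)\bigr|$. Since the admissible test forms for $\flat_X$ form a subset of those for $\mass$, one has $\flat_X \leq \mass$, so choosing each $r_j$ small enough that $C\sum_j r_j^2 < \varepsilon$ yields both required inequalities. The main delicacy is ensuring orientation compatibility of the surgery annulus with the two removed disks regardless of the sign of the transverse double point: reflecting one coordinate in the formula for $\gamma_j$ (the $\pm$ sign above) flips this sign, so one of the two choices always matches.
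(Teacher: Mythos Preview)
Your overall plan matches the paper's: reduce to $\dim X=4$, treat the finitely many double points one at a time, replace the two crossing disks by an embedded annulus, and shrink the modification so that both the flat distance and the area change are small. The paper in fact chooses a chart in which the two sheets of $S$ are \emph{exactly} the $(x_1,x_2)$- and $(x_3,x_4)$-planes near the double point, and then glues in
\[
h(e^{i\theta},t)=\bigl(e^{-1/t}\cos\theta,\ e^{-1/t}\sin\theta,\ e^{1/(t-1)}\cos\theta,\ e^{1/(t-1)}\sin\theta\bigr),
\]
whose boundary circles lie on those planes at radius $e^{-1}$; rescaling by $r$ then makes the patch as small as desired.

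Two related gaps in your version need attention. First, a \emph{linear} change of coordinates can only align the tangent planes of $D_{A,j},D_{B,j}$ with $\R^2\times\{0\}$ and $\{0\}\times\R^2$; the disks themselves need not lie in those planes, so the boundary circles of your $\gamma_j$ (which sit exactly on the coordinate planes at radius $r_j$) will not meet $\partial(S\cap B_j)$, and the pieces do not even assemble into a $C^0$ surface. You need a nonlinear chart that flattens both sheets. Second, even granting such a chart, your linear annulus does not glue $C^1$: at $t=0$ one has $\partial_t\gamma_j=r_j(-\cos\theta,-\sin\theta,\cos\theta,\pm\sin\theta)$, which is transverse to $\R^2\times\{0\}$, so the tangent plane of $\gamma_j$ along that boundary circle differs from that of the removed disk. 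Since $\widetilde S$ is fed into Proposition~\ref{key_prp} and ultimately into the integral $\int_{\widetilde S}\KD(T\widetilde S)\,\vol$, it must be at least $C^1$. The paper's $e^{-1/t}$ profile is chosen precisely so that all $t$-derivatives vanish at $t=0,1$, making the union with the coordinate planes $C^\infty$; some analogous flattening of your annulus near its boundary is required.
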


\begin{proof}
This is automatic if the dimension of $X$ is greater than 4, so we assume that it is 4. We remove the self-intersections of $S$ via a local surgery. Take a coordinate chart around a self-intersection point of $S$ such that, in this chart, $S$ is the union of the $(x_1,x_2)$-plane and the $(x_3,x_4)$-plane in $\R^4$.
Define $h: S^1 \times [0,1] \lra \R^4$ by
$$h(e^{i\theta}, t) = (e^{-1/t}\cos\theta, e^{-1/t}\sin\theta, e^{1/(t-1)}\cos\theta, e^{1/(t-1)}\sin\theta).$$
Let $B$ be the ball of radius $e^{-1}$ in the Euclidean coordinates of the chart. As all the derivatives of~$e^{-1/t}$ tend to zero as $t$ tends to zero, the subset
$$\widetilde{S} := h\big( S^1 \times [0,1] \big) \cup (S - B)$$
is a smooth, embedded submanifold which coincides with $S$ outside of $B$. We can extend the orientation from $S-B$ to all of $\widetilde{S}$, possibly after postcomposing $h$ with a reflection across a coordinate hyperplane. Therefore, by taking $r > 0$ sufficiently small and replacing the part of $S$ in this coordinate chart with $r\widetilde{S}$, one constructs an immersed oriented surface with only transverse self intersections with one fewer self-intersection satisfying~(\ref{surgery_eqn}). The result follows by induction on the number of self-intersections.
\end{proof}

\begin{proof}[{\bf{\emph{Proof of Proposition \ref{key_prp}}}}]
By the $C^1$ Nash Embedding Theorem \cite{Nash}, we can assume $X$ is a $C^1$-submanifold of $\R^n$ for some $n$ and that $g$ is the metric induced by the Euclidean metric on~$\R^n$. For $\varepsilon > 0$,
there exists a tubular neighborhood $U \supset X$ and a $(1 + \varepsilon)$-Lipschitz retraction $\rho: U \lra X$. Take $S$ and $f$ as in Corollary \ref{smooth_approx_crl} so that
$$f(S) \subset U, \quad \flat_U\big(c \cdot f_*S- T\big) < \varepsilon, \quad \textnormal{and} \quad c \cdot \area(f) < \mass(T) + \varepsilon.$$
Then, $\rho \circ f: S \lra X$ is defined and satisfies
$$\flat_X\big(c \cdot (\rho \circ f)_*S - T\big) < (1 + \varepsilon)^2\varepsilon \quad \textnormal{and} \quad c \cdot \area(\rho \circ f) < (1 + \varepsilon)^2(\mass(T) + \varepsilon).$$
By the Whitney Immersion Theorem \cite[Theorem 3.2.9]{Hirsch}, there is a smooth immersion $h: S \lra X$ with only transverse self-intersections which is $C^1$-close to $\rho \circ f$. As the area of a map is continuous in the $C^1$-topology and by Lemma \ref{flat_C1_lmm}, $h$ can be chosen so
$$c \cdot \flat_X\big(h_*S - (\rho \circ f)_*S\big) < \varepsilon \quad \textnormal{and} \quad c \cdot \area(h) < c \cdot \area(\rho \circ f) + \varepsilon.$$
By Lemma \ref{surgery_lmm}, there is an embedded closed oriented surface $\widetilde{S} \subset X$ such that
$$c \cdot \flat_X\big( \widetilde{S} - h_*S \big) < \varepsilon \quad \textnormal{and} \quad c \cdot \area\big(\widetilde{S}\,\big) < c \cdot \area(h) + \varepsilon.$$
Therefore,
\begin{align*}
    c \cdot \flat_X\big(\widetilde{S} - T\big) &\leq c \cdot \flat_X\big(\widetilde{S} - h_*S  \big) + c \cdot \flat_X\big(h_*(S) - (\rho \circ f)_*S  \big) + \flat_X\big(c \cdot (\rho \circ f)_*S  - T  \big)\\
    &< 2\varepsilon + (1 + \varepsilon)^2\varepsilon
\end{align*}
and
\begin{align*}
    c \cdot \area(\widetilde{S}) &< c \cdot \area(h) + \varepsilon < c \cdot \area(\rho \circ f) + 2\varepsilon < (1 + \varepsilon)^2(\mass(T) + \varepsilon) + 2\varepsilon\\
    &< \mass(T) + \big(3 + 2\mass(T) + 2\varepsilon + \varepsilon\mass(T) + \varepsilon^2 \big)\varepsilon.
\end{align*}
The result now follows from Lemma \ref{flat_implies_weak_lmm}.
\end{proof}

\section{Lower Energy Bound}\label{lower_energy_section}

In this section, we prove Theorem \ref{lower_energy_bound_thm}. The proof itself is rather simple. It uses the Federer-Fleming Compactness Theorem (Proposition \ref{FF_compactness_prp}) and Lemma \ref{semicalibration_lmm} to reduce to the classical case of pseudoholomorphic curves. However, the background for the proof, especially Lemma \ref{semicalibration_lmm}, is quite involved. It depends on the regularity theory of semicalibrated currents, as developed in \cite{RT} in the locally symplectic case and in \cite{DSS1,DSS2,DSS3,DSS4} in general. We review the necessary background, following \cite[Sections 4 and 5]{DW} closely. We use $\mathscr{H}^k$ to denote the $k$-dimensional Hausdorff measure of a Riemannian manifold.

\begin{dfn}
Let $(X,g)$ be a Riemannian manifold. A subset $A \subset X$ is \textit{$k$-rectifiable} if there exists a countable set of $C^1$-embedded $k$-submanifolds $A_1, A_2, \dotsc \subset X$ such that
$$A \subset \bigcup_1^\infty A_i \quad \textnormal{and} \quad \mathscr{H}^k\Big(\bigcup_1^\infty A_i - A\Big) = 0.$$
\end{dfn}

If $A \subset X$ is $k$-rectifiable, then for $\mathscr{H}^k$-almost every $x \in X$, there exists an \textit{approximate tangent space to $A$ at $x$}. It is a $k$-dimensional subspace of $T_x X$ and is denoted $\pi(A,x)$. For more details, see \cite[3.2.16]{GMT}.

\begin{dfn}
Let $(X,g)$ be a Riemannian manifold. A $k$-current $T$ on $X$ is \textit{integer rectifiable} if $\mass(T)$ is finite and there exist
\begin{enumerate}
\item a $k$-rectifiable subset $A \subset X$,
\item an $\mathscr{H}^k$-measurable function $\mathfrak{m}: A \lra \N \cup 0$, and
\item an $\mathscr{H}^k$-measurable section $\overrightarrow{T}$ of $\Lambda^k TM|_A$
\end{enumerate}
such that
\begin{enumerate}
\item for $\mathscr{H}^k$-almost all $x \in A$, the $k$-vector $\overrightarrow{T}(x)$ is given by $\overrightarrow{T}(x) = e_1 \wedge \dots \wedge e_k$ for an orthonormal frame $e_i$ of $\pi(A,x)$, and
\item $T$ can be expressed as
$$T(\omega) = \int_A \mathfrak{m}(x) \omega(\overrightarrow{T}(x)) \d \mathscr{H}^k,$$
for $\omega \in \Omega^k_c(X)$.
\end{enumerate}
A current $T$ is \textit{integral} if both $T$ and $\partial T$ are integer rectifiable.
\end{dfn}

\begin{eg}
Closed submanifolds and pseudoholomorphic curves are integral currents.
\end{eg}

\begin{prp}[{\bf{Federer-Fleming Compactness Theorem} \textnormal{\cite[4.2.17]{GMT}}}]\label{FF_compactness_prp}
Let $(X,g)$ be a compact Riemannian manifold. For every $c > 0$, the space of integral $k$-currents $T$ on $X$ such that $\mass(T)+\mass(\partial T) \leq c$ is compact in the weak topology.
\end{prp}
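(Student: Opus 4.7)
The plan is to combine a Banach-Alaoglu style weak-$*$ compactness argument with the Federer-Fleming Closure Theorem, which guarantees that weak limits of integer rectifiable currents with uniformly bounded mass and boundary mass remain integer rectifiable.

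First, I would extract a weakly convergent subsequence. Given integral $k$-currents $T_i$ with $\mass(T_i) + \mass(\partial T_i) \leq c$, each $T_i$ acts on $\Omega^k_c(X)$ with operator norm at most $c$ relative to the comass on forms, and the same bound holds for $\partial T_i$ on $\Omega^{k-1}_c(X)$. Since $X$ is compact, both test-form spaces are separable in the comass topology, so a diagonal argument over countable dense subsets produces a subsequence along which $T_i(\omega)$ and $(\partial T_i)(\alpha)$ converge for every test form $\omega$ and $\alpha$. The resulting pointwise limits define currents $T$ and $\partial T$, and by Example \ref{mass_weak_eg} they satisfy $\mass(T) + \mass(\partial T) \leq c$. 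Lemma \ref{flat_implies_weak_lmm} shows that convergence in the weak topology is equivalent to convergence in the flat seminorm on this bounded-mass set, which removes the distinction between sequential compactness in these two topologies.

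The main obstacle is showing that the limit $T$ remains \emph{integral}, since a priori a weak limit of integer rectifiable currents is only a real-valued flat chain. I would invoke the Federer-Fleming Deformation Theorem \cite[4.2.9]{GMT}: at each scale $\eta > 0$, one decomposes $T_i = P_i^\eta + \partial R_i^\eta + Q_i^\eta$ on a cubical grid of side length $\eta$, where $P_i^\eta$ is an integer polyhedral chain supported on the $k$-skeleton and $\mass(R_i^\eta) + \mass(Q_i^\eta) \leq C \eta \big(\mass(T_i) + \mass(\partial T_i)\big)$ for a universal constant $C$. On any fixed grid, integer polyhedral chains of bounded mass form a discrete set, so a further diagonal argument over $\eta = 1/n$ produces integer polyhedral chains $P^{1/n}$ with $\flat_X\big(P^{1/n} - T\big) \to 0$. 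This realizes $T$ as a flat-norm limit of integer polyhedral chains of uniformly bounded mass. The Closure Theorem then supplies an integer rectifiable structure on $T$ by slicing with coordinate hyperplanes and inducting on dimension down to the one-dimensional case, where integer rectifiable currents are simply finite sums of oriented arcs with integer multiplicities. Running the same argument on $\partial T_i$ handles $\partial T$, completing the proof.
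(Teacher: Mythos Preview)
The paper does not prove this proposition; it is cited directly from Federer \cite[4.2.17]{GMT}, and the only argument supplied is the one-line observation immediately following the statement: the result in \cite{GMT} is for compactly supported currents in $\R^n$, and the Nash Embedding Theorem transfers it to a compact Riemannian manifold. Your proposal, by contrast, attempts to sketch the full proof of the compactness theorem itself.

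As a sketch, your outline has the right two ingredients (weak-$*$ sequential compactness plus the Closure Theorem), but the logic in the second paragraph is tangled. You introduce the Deformation Theorem to approximate each $T_i$ by integer polyhedral chains $P_i^\eta$, extract a limiting polyhedral approximation $P^{1/n}$ of $T$, and then write ``The Closure Theorem then supplies an integer rectifiable structure on $T$\ldots''. If you are willing to invoke the Closure Theorem \cite[4.2.16]{GMT} as a black box, you can apply it directly to the sequence $T_i$ and skip the polyhedral detour entirely; the Deformation Theorem is part of Federer's \emph{proof} of the Closure Theorem, not a separate reduction you need to perform first. Conversely, if your intent is to \emph{prove} the Closure Theorem, then the slicing-and-induction step you allude to is the entire difficulty and cannot be dispatched in a phrase. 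You also apply the Deformation Theorem on a cubical grid without saying how you pass to $\R^n$; the paper's Nash-embedding remark is exactly what handles this, and you should include it too.

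In short: the paper treats this as a citation with a one-sentence reduction to the Euclidean case, while you are trying to reprove a foundational theorem of geometric measure theory. For the purposes of this paper, the citation is the appropriate level of detail.
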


Proposition \ref{FF_compactness_prp} is stated in \cite{GMT} for compactly supported currents on $\R^n$. The Nash Embedding Theorem~\cite{Nash} immediately implies that it also holds in the form above.

\begin{dfn}
Let $(X,g)$ be a Riemannian manifold. A \textit{semicalibration} is a $k$-form $\omega$ such that $||\omega|| \leq 1$. An integral $k$-current is \textit{semicalibrated} by $\omega$ if $\omega(\overrightarrow{T}(x)) = 1$ for $\mathscr{H}^k$-almost every $x \in \supp(T)$.
\end{dfn}

\begin{eg}\label{complex_cycles_semicalibrated_eg}
Let $(X,J,\omega)$ be an almost Hermitian manifold. By Wirtinger's Inequality (Lemma~\ref{wirtinger_lmm}), $\omega$ is a semicalibration. By Example~\ref{complex_cycles_are_normal_eg}, integral complex cycles are semicalibrated by $\omega$.
\end{eg}

\begin{dfn}\label{J_cycle_dfn}
A \textit{$J$-holomorphic cycle} $S$ is a linear combination
$$S := \sum_1^l n_i S_i\,,$$
where $l >0$, $S_i$ are currents of integration over $J$-holomorphic curves, and $n_i \in \N \cup 0$.
\end{dfn}

\begin{lmm}[{\cite[Lemma 5.5]{DW}}]\label{semicalibration_lmm}
Let $(X,J,\omega)$ be an almost Hermitian manifold. If $T$ is a closed integer rectifiable current with compact support which is semicalibrated by $\omega$, then it is a $J$-holomorphic cycle.
\end{lmm}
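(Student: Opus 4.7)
The plan is to combine the pointwise rigidity coming from Wirtinger's inequality with the regularity theory of semicalibrated integral currents.

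\textbf{Step 1: Infinitesimal $J$-invariance.} Because $T$ is integer rectifiable, for $\mathscr{H}^2$-almost every $x \in \supp(T)$ the unit simple $2$-vector $\overrightarrow{T}(x) = e_1 \wedge e_2$ orients the approximate tangent plane $\pi(\supp(T), x)$. The identity $\omega\bigl(\overrightarrow{T}(x)\bigr) = 1$ together with $\|\omega\| \leq 1$ places the pair $(\pi(\supp(T),x), \overrightarrow{T}(x))$ in the equality case of Wirtinger's inequality (Lemma \ref{wirtinger_lmm}). Hence at $\mathscr{H}^2$-almost every point of $\supp(T)$ the approximate tangent plane is $J$-invariant and carries the orientation induced by $J$.

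\textbf{Step 2: Regularity of the support.} At this stage $T$ is ``tangentially $J$-holomorphic'' only in a measure-theoretic sense, and the core task is to upgrade this to an honest geometric decomposition. This is precisely what the regularity theory of semicalibrated $2$-currents supplies: in the locally symplectic case by \cite{RT}, and in full generality by the multi-valued graph and center-manifold machinery of \cite{DSS1, DSS2, DSS3, DSS4}. Those results show that $\supp(T)$ is, away from a locally $\mathscr{H}^0$-finite singular set, a smooth embedded $J$-holomorphic surface, and near each singular point it is modelled on a classical branched holomorphic graph. Consequently $\supp(T)$ decomposes into a countable collection of irreducible (possibly singular) $J$-holomorphic curves $S_i$, each carrying a constant positive integer multiplicity $n_i$, where constancy of the integer-valued multiplicity on connected pieces of the regular set follows from $\partial T = 0$ combined with the rectifiable structure.

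\textbf{Step 3: Finiteness and assembly.} Compactness of $\supp(T)$ together with finite mass rules out infinitely many irreducible components: the monotonicity formula for area of $J$-invariant rectifiable currents provides a uniform lower bound on the mass contributed by any component meeting a fixed compact set, so only finitely many $S_i$ can occur. Writing
\begin{equation*}
T = \sum_{i=1}^{l} n_i S_i
\end{equation*}
matches Definition \ref{J_cycle_dfn} and completes the proof.

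The main obstacle is Step 2. In the K\"ahler setting, this passage from tangential $J$-invariance to a genuine holomorphic cycle is the classical theorem of King and Harvey--Shiffman, proved by complex-analytic methods that are unavailable here. In the almost complex case, the required interior regularity at singular points rests on the full De Lellis--Spadaro--Spolaor program, and it is this black box, rather than any of the pointwise or compactness manipulations surrounding it, that carries the proof.
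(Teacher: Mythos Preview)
The paper does not actually prove this lemma: it is quoted verbatim as \cite[Lemma~5.5]{DW} and used as a black box. So there is no ``paper's own proof'' to compare against beyond the citation itself.

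That said, your sketch is an accurate outline of how the result is established in \cite{DW}, and you have correctly located the load-bearing input. Step~1 is exactly right and uses only Lemma~\ref{wirtinger_lmm}. Step~2 is the substantive part, and you are right that in the non-K\"ahler almost complex setting it rests entirely on the De~Lellis--Spadaro--Spolaor regularity program \cite{DSS1,DSS2,DSS3,DSS4} (with \cite{RT} covering the locally symplectic case); your remark that King/Harvey--Shiffman methods are unavailable here is on point. Step~3 is also correct: monotonicity plus finite mass on a compact set bounds the number of irreducible components. One small tightening: in Step~2 the constancy of the multiplicity on connected pieces of the regular part is usually phrased as an application of the constancy theorem for integral currents (your Lemma~\ref{constancy_lmm} in codimension zero on the regular part), rather than deduced directly from ``$\partial T=0$ combined with the rectifiable structure''; what you wrote is not wrong, but making the constancy theorem explicit would make the argument self-contained.
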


\begin{crl}\label{coarse_integral_crl}
Let $(X,J,\omega)$ be a compact almost Hermitian manifold. Suppose there exist sequences $\varepsilon_i > 0$, $n_i \in \N$, and $S_i$, such that $S_i$ is coarsely $\varepsilon_i$-holomorphic, $\varepsilon_i$ tends to zero, and $n_i \, \area(S_i)$ is bounded. Then, there exists a subsequence of $n_i S_i$ converging weakly to a $J$-holomorphic cycle.
\end{crl}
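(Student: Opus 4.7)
The plan is to extract a weakly convergent subsequence via the Federer-Fleming Compactness Theorem, show that the limit $T$ satisfies $T(\omega) = \mass(T)$, and then invoke Lemma \ref{semicalibration_lmm} to identify $T$ as a $J$-holomorphic cycle.

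First I would verify that $n_iS_i$ is a sequence of integral $2$-currents with uniformly bounded mass and zero boundary. Each $S_i$ is a closed oriented surface in the compact manifold $X$, so it induces an integral current; multiplication by $n_i \in \N$ preserves this, and $\partial(n_iS_i) = 0$. By hypothesis $\mass(n_iS_i) = n_i\,\area(S_i)$ is bounded, so Proposition \ref{FF_compactness_prp} yields a subsequence (still denoted $n_iS_i$) converging weakly to an integral current $T$ on $X$. Continuity of $\partial$ in the weak topology then gives $\partial T = 0$, and $\supp(T) \subset X$ is automatically compact.

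Next I would convert the coarse holomorphicity hypothesis into a lower bound on $T(\omega)$. Using $\KD = 1 - \omega|_{TS}/\vol_{TS}$ pointwise,
$$\int_{S_i}\KD\,\vol \,=\, \area(S_i) - \int_{S_i}\omega,$$
so the coarse $\varepsilon_i$-holomorphicity of $S_i$ rearranges to $n_i\int_{S_i}\omega > (1-\varepsilon_i)\,\mass(n_iS_i)$. Since $\omega$ is a smooth form on the compact manifold $X$, weak convergence gives $n_iS_i(\omega) \to T(\omega)$, and mass is lower semicontinuous under weak convergence (Example \ref{mass_weak_eg}). Combined with $\varepsilon_i \to 0$, this yields
$$T(\omega) \,\geq\, \liminf_i \mass(n_iS_i) \,\geq\, \mass(T).$$
Wirtinger's Inequality (Lemma \ref{wirtinger_lmm}) gives the opposite inequality $T(\omega) \leq \mass(T) \cdot \|\omega\|_X \leq \mass(T)$, so $T(\omega) = \mass(T)$.

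Finally, I would promote this equality of integrals to the pointwise semicalibration condition. Writing $T$ via its integer rectifiable data $(A, \mathfrak{m}, \overrightarrow{T})$, one has $\mass(T) = \int_A \mathfrak{m}\,\d\mathscr{H}^2$ and $T(\omega) = \int_A \mathfrak{m}(x)\,\omega(\overrightarrow{T}(x))\,\d\mathscr{H}^2$ with $\omega(\overrightarrow{T}(x)) \leq 1$ pointwise by Wirtinger. Equality of the two integrals forces $\omega(\overrightarrow{T}(x)) = 1$ for $\mathscr{H}^2$-almost every $x \in \supp(T)$, so $T$ is semicalibrated by $\omega$, and Lemma \ref{semicalibration_lmm} concludes that $T$ is a $J$-holomorphic cycle. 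The main obstacle is the squeeze that yields $T(\omega) = \mass(T)$: the hypotheses provide only an approximate bound $\omega(\overrightarrow{S_i}) \geq 1 - \varepsilon_i$ in an averaged sense, and combining lower semicontinuity of mass with Wirtinger's Inequality is what upgrades this to the sharp limiting equality.
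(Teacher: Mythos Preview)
Your proposal is correct and follows essentially the same approach as the paper: apply Federer--Fleming compactness, establish $\mass(T)=T(\omega)$ from the coarse holomorphicity, and invoke Lemma~\ref{semicalibration_lmm}. The paper's proof compresses the middle step into the phrase ``By construction, $\mass(S)=\int_S\omega$'' and omits the pointwise argument deducing the semicalibration condition from this equality; you have spelled out both of these correctly.
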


\begin{proof}
By Proposition \ref{FF_compactness_prp}, there exists a subsequence $n_j S_j$ of $n_i S_i$ converging weakly to an integral current $S$.
It follows from $\varepsilon_j$ tending to zero, the definition of weak limit, Lemma~\ref{wirtinger_lmm}, and Lemma~\ref{mass_weak_lmm}, respectively, that
$$\lim \mass(n_j S_j) = \lim n_j \int_{S_j} \omega = S(\omega) \leq \mass(S) \leq \liminf \mass(n_j S_j).$$
Thus, $\mass(S) = S(\omega)$, so $S$ is semicalibrated by $\omega$. Therefore, by Lemma \ref{semicalibration_lmm}, $S$ is a $J$-holomorphic cycle.
\end{proof}

\begin{prp}[\bf{Lower Energy Bound}]\label{classic_energy_prp}
Let $(X,J,\omega)$ be a compact almost Hermitian manifold. There exists $\hbar > 0$ such that 
$$\area(S) > \hbar$$
for every pseudoholomorphic curve $S$.
\end{prp}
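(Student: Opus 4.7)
The plan is to deduce Proposition \ref{classic_energy_prp} from the classical monotonicity inequality for pseudoholomorphic curves, as in \cite[Corollary~4.2]{Zinger}. Specifically, I would establish the existence of uniform constants $r_0 > 0$ and $c > 0$, depending only on $(X, J, \omega)$, such that for every pseudoholomorphic curve $S \subset X$, every $x \in S$, and every $r \in (0, r_0]$,
$$\area\big(S \cap B_r(x)\big) \geq c\, r^2,$$
where $B_r(x)$ denotes the geodesic ball of radius $r$ centered at $x$. Granted this bound, the proposition follows at once: a pseudoholomorphic curve is a nonempty closed oriented submanifold, so we may fix any $x \in S$ and conclude $\area(S) \geq \area(S \cap B_{r_0}(x)) \geq c r_0^2$, taking $\hbar := \tfrac{1}{2} c r_0^2$.

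To prove the monotonicity bound, I would fix $x \in X$ and work in geodesic normal coordinates on a ball whose radius $r_0$ is chosen uniformly via the compactness of $X$. In such coordinates, $J$ is a small perturbation of the standard complex structure on $\R^{2n}$, and $\omega$ is a small perturbation of the standard Kähler form. Set $f(r) := \area(S \cap B_r(x))$; by Wirtinger's inequality (Lemma \ref{wirtinger_lmm}) and the $J$-invariance of $TS$, one has $f(r) = \int_{S \cap B_r(x)} \omega$. Combining the coarea formula for $f'(r)$ with Stokes' theorem applied to a locally defined primitive of $\omega$ on the contractible coordinate ball yields a differential inequality of the form $\big(e^{Cr} r^{-2} f(r)\big)' \geq 0$ for $r \in (0, r_0]$, with $C$ depending only on $(X, J, \omega)$. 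Integrating this from $0$ to $r$ and using that $f(r)/r^2$ tends to a positive multiple of the local multiplicity of $S$ at $x$ as $r \to 0$ gives the desired lower bound $f(r) \geq c r^2$ with a uniform constant $c$.

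The main obstacle is that $\omega$ need not be closed in the almost Hermitian setting, so this is not a pure calibration argument: the error term arising from $d\omega$ (and implicitly from the Nijenhuis tensor of $J$) must be absorbed into the exponential factor $e^{Cr}$. Compactness of $X$ is what ensures $C$, $r_0$, and $c$ can be chosen once and for all, independently of $S$. Since this monotonicity formula is standard in the literature, the most efficient route is simply to invoke \cite[Corollary~4.2]{Zinger} and observe that the constants produced there are uniform over $X$ by compactness.
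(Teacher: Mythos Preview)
Your proposal is correct and matches the paper's approach exactly: the paper does not give a detailed proof of Proposition~\ref{classic_energy_prp} either, but simply records that it is a well-known consequence of the Monotonicity Lemma \cite[Proposition~3.12]{Zinger}, citing \cite[Proposition/Corollary~4.2]{Zinger}. Your sketch of how monotonicity is obtained (coarea plus Stokes with the $d\omega$ error absorbed into an exponential factor, constants uniform by compactness) is more than the paper provides and is entirely in line with the cited reference.
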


Proposition \ref{classic_energy_prp} is well-known (cf. \cite[Proposition 4.2]{Zinger}). It is an immediate consequence of the well-known the Monotonicity Lemma \cite[Proposition 3.12]{Zinger}. 
It is likely that a Monotonicity Lemma for semicalibrated currents like \cite[Proposition~2.1]{DSS4} could prove Theorem~\ref{lower_energy_bound_thm} directly, without appealing to Lemma~\ref{semicalibration_lmm}.

\begin{proof}[{\bf{\emph{Proof of Theorem \ref{lower_energy_bound_thm}}}}]
Let $\hbar$ be as in Proposition \ref{classic_energy_prp}. Suppose (\ref{lower_bound_eqn}) does not hold. Then, there exist sequences $\varepsilon_i > 0$ and $S_i$ such that $S_i$ is coarsely $\varepsilon_i$-holomorphic, $\area(S_i) < \hbar$, and $\varepsilon_i$ tends to zero. Furthermore, there exists a sequence $n_i \in \N$ such that, for $i$ sufficiently large, 
$$\frac{\hbar}{2} \leq n_i \int_{S_i} \omega < \hbar.$$ By Corollary \ref{coarse_integral_crl}, there exists a subsequence $n_j S_j$ of $n_i S_i$ that converges to a $J$-holomorphic cycle~$S$. The cycle is nonzero, because $$\mass(S) = S(\omega) = \lim n_j \int_{S_j} \omega \geq \frac{\hbar}{2}.$$ If $S'$ is a nontrivial summand of $S$ as in Definition \ref{J_cycle_dfn}, then $S'$ is a pseudoholomorphic curve, but $$\area(S') \leq \mass(S) = S(\omega) \leq \hbar,$$
which contradicts the assumption on $\hbar$.
\end{proof}

\vspace{.3in}

{\it Department of Mathematics, Stony Brook University, Stony Brook, NY 11794\\
spencer.cattalani@stonybrook.edu}

\newpage
Data Availability Statement: No data is contained in this manuscript. Conflict of Interest Statement: On behalf of all authors, the corresponding author states that there is no conflict of interest.

\end{document}